\newcommand{\mathscr}{\mathcal}
\newtheorem{theorem}{Theorem}[section]
\newtheorem{lemma}[theorem]{Lemma}
\newtheorem{corollary}[theorem]{Corollary}
\newtheorem{assumption}[theorem]{Assumption}
\newtheorem{example}[theorem]{Example}
\DeclareMathOperator*{\argmin}{arg\,min}
\DeclareMathOperator{\prox}{prox}
\newcommand{\Rb}{\mathbbm{R}}
\newcommand{\Gc}{\mathcal{G}}
\newcommand\mbR{\mbox{$\mathbbm{R}$}}
\newcommand\mcK{\mathcal{K}}
\DeclareMathOperator*{\argmax}{arg\,max}
\newenvironment{tightlist}[1]{%
    \list{{\textup{(\roman{enumi})}}}{\settowidth\labelwidth{{\textup{(#1)}}}
    \leftmargin -6pt \advance\leftmargin\labelsep \itemindent \parindent
    \parsep 0pt plus 1pt minus 1pt \topsep 0pt \itemsep 0pt
    \usecounter{enumi}}}{\endlist}
\title{Selective Linearization For Multi-Block Convex Optimization}
\author{
 Yu Du\thanks{
Rutgers University, Department of Management Science and Information Systems, Piscataway, NJ 08854, USA, Email:{duyu@rutgers.edu}}
\and
 Xiaodong Lin\thanks{
Rutgers University, Department of Management Science and Information Systems, Piscataway, NJ 08854, USA, Email:{linxd@rci.rutgers.edu}}
\and
 Andrzej Ruszczy\'nski\thanks{
Rutgers University, Department of Management Science and Information Systems, Piscataway, NJ 08854, USA, Email:{rusz@rutgers.edu}}
}
\date{November 6, 2015; revised: March 3, 2016; August 12, 2016}
\begin{document}
\maketitle


\begin{abstract}
We consider the problem of minimizing a sum of several convex non-smooth functions. We introduce a new algorithm called the selective linearization method, which iteratively linearizes all but one of the functions and employs simple proximal steps. The algorithm is a form of multiple operator splitting in which the order of processing partial functions is not fixed, but rather determined in the course of
calculations.
Global convergence is proved and estimates of the convergence rate are derived. Specifically, the number of iterations needed to achieve solution accuracy $\varepsilon$ is of order $\mathcal{O}\big(\ln(1/\varepsilon)/\varepsilon\big)$. We also illustrate the operation of the algorithm on structured regularization problems.\\
\emph{Keywords:} {Nonsmooth optimization, operator splitting, multiple blocks, alternating linearization}
\end{abstract}



\section{Introduction}
 In recent years, we have seen extensive development of the theory and methods for \emph{structured regularization}, one of the most fundamental techniques to address the ``big data'' challenge. The basic problem is to minimize the following objective function with two components (blocks):
\begin{equation}
\label{problem1}
\min \Big[ {F}(x)=f_1(x)+f_2(x) \Big],
\end{equation}
where $f_1(\cdot)$ is the loss function and $f_2(\cdot)$ is a penalty function that imposes
{structured regularization} to the model. Both functions are usually convex, but may be nonsmooth. Many data mining and machine learning problems can be cast within this framework, and efficient methods were proposed for these problems. The first group are the
\emph{operator splitting} methods originating from \cite{Douglas-Rachford} and \cite{Peaceman-Rachford}, and later developed and analyzed by \cite{Bauschke-Combettes,Combettes2009,Eckstein-Bertsekas,Lions-Mercier}, among others.
Their dual versions, known as \emph{Alternating Direction Methods of Multipliers} (ADMM) (see,
 \cite{gabay1976dual,glowinski1975approximation,glowinski1989augmented}), found many applications
 in signal processing (see, \emph{e.g.}, \cite{Boyd-et-al,Combettes-Pesquet}, and the references therein). Sometimes, they are called \emph{split Bregman methods}  (see, \emph{e.g.}, \cite{Goldstein-Osher,Ye-Xie}).

The \emph{Alternating Linearization Method} (ALIN) of \cite{kiwiel1999proximal} handles problems of form \eqref{problem1} by introducing an additional improvement test to the operator splitting methods, which decides whether the proximal center should be updated or stay unchanged, and which of the operator splitting formulas should be applied at the current iteration.
Its convergence mechanism is different than that of the splitting methods; it adapts some ideas of bundle methods of nonsmooth optimization \cite{HUL-book,Kiwiel-book,ruszczynski2006nonlinear}. The recent application of ALIN to structured regularization problems in \cite{lin2014alternating} proved very successful, with fast convergence, good accuracy, and scalability to very large dimensions.

Most of existing techniques for structured regularization are designed to handle the two-block problem of form \eqref{problem1}.

In this paper, we plan to extend the {ALIN} framework to optimization problems involving multiple components. Namely, we aim to solve the following problem:
\begin{equation}
\label{problem}
\min \Big\{ {F}(x) =  \sum_{i=1}^N f_i(x)\Big\},
\end{equation}
with convex (possibly nondifferentiable) functions  $f_i:\Rb^n\to\Rb$, $i=1,\dots,N$,
where the number of component functions, $N$, may be arbitrarily large.
We only assume that the minimum exists.

In a typical application, $f_1(\cdot)$ may be the loss function, similar to problem \eqref{problem1},
while the penalty function is a sum of multiple components. This type of generalization has many practical applications, including low rank matrix completion, compressed sensing, dynamic network analysis, and computer vision.

To the best of the authors' knowledge,
no general convergent versions of operator splitting methods for multiple blocks exist. A
known way is to introduce $N$ copies $x^1=x^2=\dots=x^N$ of $x$, and reduce the problem to the two-function case in the space $\Rb^{nN}$ \cite{Combettes-Pesquet}:
\[
\min \sum_{i=1}^N f_i(x^i) + I(x^1,\dots,x^N)
\]
with $I(\cdot)$ denoting the indicator function of the subspace $x^1=x^2=\dots=x^N$. Similar ideas were used in stochastic programming, under the
name of \emph{Progressive Hedging} \cite{rockafellar1991scenarios}. A method for three blocks with one
function being differentiable was theoretically analyzed in \cite{condat2013primal}.

Our new algorithm, which we call the \emph{Selective Linearization Method} (SLIN), does not
replicate the decision variables. It generates a  sequence of points $x^k\in \Rb^n$ with
a monotonic sequence of corresponding function values $\big\{F(x^k)\big\}$.
At each iteration, it linearizes all but one
of the component functions and uses a proximal term penalizing for the distance to the last iterate.
In a sense, each step is a backward step of the form employed in operator splitting. The order of
processing the functions is not fixed; the method uses precise criteria for selecting the function to be treated exactly at the current step. It also employs
special rules for updating the proximal center. These two rules differ our approach from the simultaneously proposed
incremental proximal method of \cite{bertsekas-report}, which applies to smooth functions only, and achieves linear
convergence rate in this case.

The algorithm is a multi-block extension of the Alternating Linearization method for solving two-block nonsmooth optimization problems. Global convergence and convergence rate of the new algorithm are proved. Specifically, the new algorithm is proven to require at most $\mathcal{O}\big(\ln(1/\varepsilon)/\varepsilon\big)$ iterations to achieve solution accuracy
$\varepsilon$.

In section \ref{s:method}, we present the method and prove its global convergence. The convergence rate is derived in section \ref{s:rate}. Finally, in section \ref{s:application}, we
illustrate its operation on structured regularized regression problems involving many blocks.

\section{The Method}
\label{s:method}

Our method derives from two fundamental ideas of convex optimization:  the \emph{Moreau--Yosida regularization} of $F(\cdot)$,
\begin{equation}
\label{MY-regularization}
F_D(y) = \min \Big\{ F(x) +  \frac{1}{2}\big\|x-y\big\|^2_D \Big\},
\end{equation}
and the \emph{proximal step} for \eqref{problem},
\begin{equation}
\label{prox}
\prox_F(y) = \argmin \Big\{ F(x) +  \frac{1}{2}\big\|x-y\big\|^2_D \Big\}.
\end{equation}
In the formulas above, the norm $\|x\|_{_{D}} = \big(\langle x,D x\rangle\big)^{1/2}$ with a positive definite matrix $D$.
In applications, we shall use a diagonal $D$, which leads to major computational simplifications. The \emph{proximal point method} carries out the
iteration $x^{k+1} = \prox_F(x^k)$, $k=1,2,\dots$ and is known to converge to a minimum of $F(\cdot)$, if a minimum exists \cite{rockafellar1976monotone}.

The main idea of our method is to replace problem \eqref{problem} with a sequence of approximate problems
of the following form:
\begin{equation}
\label{subproblem}
\min_x f_{j_k}(x) + \sum_{i\neq j_k}\tilde{f}^{k}_i(x) + \frac{1}{2}\big\|x-x^k\big\|^2_D.
\end{equation}
Here $k=1,2,\dots$ is the iteration number, $x^k$ is the current best approximation to the solution, $j_k\in \{1,\dots,N\}$
is an index selected at iteration $k$, and
$\tilde{f}_i^{k}$ are \emph{affine minorants} of the functions $f_i$, $i\in \{1,\dots,N\}\setminus \{j_k\}$. These minorants are constructed as follows:
\[
\tilde{f}^{k}_i(x) = f_i(z^{k}_{i}) + \langle g^{k}_{i}, x - z^k_{i}\rangle,
\]
with some points $z^{k}_i\in \Rb^n$ and specially selected subgradients
$g^{k}_{i} \in \partial f_i(z^{k}_i)$. Thus, problem \eqref{subproblem} differs from the proximal point problem in
\eqref{MY-regularization} by the fact that only one of the functions $f_i(\cdot)$ is treated exactly, while the other functions are replaced by affine approximations.

The key elements of the method are the selection of the index $j_k$, the way the affine approximations are constructed,
and the update rule for the proximal center $x^k$. In formula \eqref{subproblem} and in the algorithm description below
we write simply $i\ne j_k$ for $i\in \{1,\dots,N\}\setminus\{j_k\}$.

We denote the function approximating $F(x)$ in \eqref{subproblem} by
\[
\tilde{F}^k(x) = f_{j_k}(x) + \sum_{i\neq j_k}\tilde{f}^{k}_i(x).
\]

\noindent
\textbf{Selective Linearization (SLIN) Algorithm}\vspace{1ex}\\
\textbf{Step 0:} Set $k=1$ and $j_1\in \{1,\dots,N\}$, select $x^1 \in \Rb^n$ and, for all $i\ne j_1$, linearization points
$z^1_{i}\in \Rb^n$ where the corresponding subgradients $g^1_{i}\in \partial f_i(z^1_{i})$ exist. Define $\tilde{f}^1_i(x) = f_i(z^1_{i}) + \langle g^1_{i}, x - z^1_{i}\rangle$ for $i\ne j_1$. Choose parameters $\beta \in (0,1)$, and a stopping precision $\varepsilon > 0$. \vspace{1ex}\\
\textbf{Step 1:} Find the solution $z^k_{j_k}$ of the $f_{j_k}$-subproblem \eqref{subproblem} and
define
\begin{equation}
\label{subgradient-selection}
g^k_{j_k} = -\sum_{i\ne j_k}g^{k}_i - D(z^k_{j_k}-x^k).
\end{equation}
\textbf{Step 2:}  If
\begin{equation}\label{eqn: n_v}
F(x^k) - \tilde{F}^k(z^k_{j_k}) \le \varepsilon,
\end{equation}
then stop. Otherwise, continue.\vspace{1ex}\\
\textbf{Step 3:} If
\begin{equation}\label{eqn: n_new_rule1}
F(z^k_{j_k}) \leq F(x^k) - \beta\big(F(x^k) - \tilde{F}^k(z^k_{j_k})\big),
\end{equation}
then set $x^{k+1} = z^k_{j_k}$ (\emph{descent step}); otherwise set $x^{k+1}=x^k$ (\emph{null step}).\vspace{1ex}\\
\textbf{Step 4:} Select
\begin{equation}
\label{j-selection}
j_{k+1} = \argmax_{i\neq j_k}\big\{f_i(z^k_{j_k}) - \tilde{f}^{k}_i(z^k_{j_k})\big\}.
\end{equation}
For all $i\ne j_{k+1}$, set $z^{k+1}_{i} = z^{k}_{i}$ and $g^{k+1}_{i} = g^{k}_{i}$ (so that $\tilde{f}^{k+1}_i(\cdot) \equiv \tilde{f}^{k}_i(\cdot)$). Increase $k$ by~1 and go to Step 1.\vspace{1ex}

Few comments are in order. Since the point $z^k_{j_k}$ is a solution of the subproblem \eqref{subproblem}, the
vector $g^k_{j_k}$ calculated in \eqref{subgradient-selection} is indeed a subgradient of $f_{j_k}$ at $z^k_{j_k}$; in fact, it is exactly the subgradient that features in the optimality condition for \eqref{subproblem} at $z^k_{j_k}$. Therefore,
at all iterations, the functions $\tilde{f}^{k}_i(\cdot)$ are minorants of the functions $f_i(\cdot)$. This in turn implies that $\tilde{F}^k(\cdot)$ is a lower approximation of $F(\cdot)$. Consequently, $F(x^k) - \tilde{F}^k(z^k_{j_k}) \ge 0$ in \eqref{eqn: n_v}, with $F(x^k) = \tilde{F}^k(z^k_{j_k})$ equivalent to $x^k$ being the minimizer of $F(\cdot)$.

In practical implementation of the algorithm, the points $z_i^k$ need not be stored. It is sufficient to
memorize $\alpha_i^k = f_i(z_i^k) - \langle g_i^k,z_i^k \rangle$ and the subgradients $g_i^k$. At Step 4, we then set $\alpha_i^{k+1} = \alpha_i^k$ and
$g_i^{k+1}=g_i^k$ for all $i\in \{1,\dots,N\}\setminus\{j_k, j_{k+1}\}$, while
$\alpha_{j_{k}}^{k+1} = f_i(z_i^k) - \langle g_i^k,z_i^k \rangle$. For $j_{k+1}$ these data are not needed, because the
function $f_{j_{k+1}}(\cdot)$ will not be linearized at the next iteration.

In some cases, the storage of the subgradients $g_i^k$ may be substantially simplified.
\begin{example}
\label{e:sumphi}
{\rm
Suppose
\[
F(x) = \sum_{i=1}^N \varphi_i(a_i^T x),
\]
with convex functions $\varphi_i:\Rb\to\Rb$ and $a_i\in \Rb^n$, $i-1,\dots,n$. Then every subgradient of
$f_i(x) = \varphi_i(a_i^T x)$ has the form $g_i^k = \sigma_i^k a_i$, with $\sigma_i^k\in \partial \varphi_i(a_i^T z_i^k)$.
The scalars $\sigma_i^k$ are sufficient for recovering the subgradients, because the vectors $a_i$ are part of the
problem data.
}
\end{example}

\section{Global convergence}
 We assume that $\varepsilon=0$ in  Step 2. To prove convergence of the algorithm,
 we consider two cases:  with finitely or infinitely many descent steps.

 We first address the finite case and show that the proximal center updated in the last descent step must be an optimal solution to problem \eqref{problem}. To this end,
we prove that if a null step is made at iteration $k$, then the optimal objective function values of consecutive subproblems are increasing and the gap is bounded below by a value determined by
\begin{equation}
\label{vk-def}
v_k = F(x^k) - \tilde{F}^k(z^k_{j_k}).
\end{equation}
We shall also use this result in the proof of convergence rate.

We denote the optimal objective function value of subproblem \eqref{subproblem} at iteration $k$ by
\[
\eta^k=\min_x f_{j_k}(x) + \sum_{i\neq j_k}\tilde{f}^{k}_i(x) + \frac{1}{2}\big\|x-x^k\big\|^2_D.
\]
\begin{lemma}\label{lem: increment_bound}
If a null step is made at iteration $k$, then
\begin{equation}
\label{eta-increase}
\eta^{k+1} \ge \eta^k + \frac{1-\beta}{2(N-1)}\bar{\mu}_kv_k,
\end{equation}
where
\begin{equation}
\label{muk-def}
\bar{\mu}_k =
\min\Big\{1, \frac{(1-\beta)v_k}{(N-1)\|s_{j_{k+1}}^k - g^{k}_{j_{k+1}}\|_{D^{-1}}^2}\Big\},
\end{equation}
with an arbitrary $s_{j_{k+1}}^k\in\partial f_{j_{k+1}}(z^k_{j_k})$.
\end{lemma}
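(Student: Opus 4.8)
The plan is to exploit the $1$-strong convexity of the subproblems and to build an exact quadratic ``aggregate'' model that ties iteration $k$ to iteration $k+1$. Write the objective of \eqref{subproblem} as $\psi_k(x)=f_{j_k}(x)+\sum_{i\ne j_k}\tilde f^k_i(x)+\tfrac12\|x-x^k\|_D^2$; it is $1$-strongly convex in $\|\cdot\|_D$, minimized at $z^k_{j_k}$ with value $\eta^k$. The central first step is to replace the exact piece $f_{j_k}$ by its own linearization at $z^k_{j_k}$ (namely the minorant $\tilde f^{k+1}_{j_k}$ carried forward in Step 4) and consider the fully linearized model $\hat\psi(x)=\tilde f^{k+1}_{j_k}(x)+\sum_{i\ne j_k}\tilde f^k_i(x)+\tfrac12\|x-x^k\|_D^2$. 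Using precisely the subgradient selection \eqref{subgradient-selection}, I would show that $\nabla\hat\psi(x)=D(x-z^k_{j_k})$, so $z^k_{j_k}$ minimizes $\hat\psi$ as well; since $\tilde f^{k+1}_{j_k}(z^k_{j_k})=f_{j_k}(z^k_{j_k})$, this gives the identity $\hat\psi(x)=\eta^k+\tfrac12\|x-z^k_{j_k}\|_D^2$.

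Next I would compare $\hat\psi$ with the subproblem at iteration $k+1$. Because the step is a null step, $x^{k+1}=x^k$, and because Step 4 keeps $z^{k+1}_i=z^k_i$ and $g^{k+1}_i=g^k_i$ for every $i\ne j_{k+1}$, all linearizations of blocks $i\ne j_k,j_{k+1}$ are unchanged while block $j_k$ now enters through $\tilde f^{k+1}_{j_k}$. Hence $\psi_{k+1}(x)=\hat\psi(x)+r(x)$ with $r(x)=f_{j_{k+1}}(x)-\tilde f^k_{j_{k+1}}(x)$. The residual $r$ is convex and nonnegative (minorant property), equals the gap $\delta:=f_{j_{k+1}}(z^k_{j_k})-\tilde f^k_{j_{k+1}}(z^k_{j_k})$ at $z^k_{j_k}$, and satisfies $s^k_{j_{k+1}}-g^k_{j_{k+1}}\in\partial r(z^k_{j_k})$ for any $s^k_{j_{k+1}}\in\partial f_{j_{k+1}}(z^k_{j_k})$. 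Combining with the identity above,
\[
\eta^{k+1}=\min_x\psi_{k+1}(x)\ \ge\ \eta^k+\min_{x}\Big[\tfrac12\|x-z^k_{j_k}\|_D^2+r(x)\Big].
\]

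To size the last minimum I would bound $r$ below by $\max\{0,\ \delta+\langle s^k_{j_{k+1}}-g^k_{j_{k+1}},\,x-z^k_{j_k}\rangle\}$, combining nonnegativity with the subgradient inequality. Minimizing $\tfrac12\|x-z^k_{j_k}\|_D^2$ plus this clamped affine term over $d=x-z^k_{j_k}$ is an elementary quadratic problem whose value is $\delta-\tfrac12 b$ when $\delta\ge b$ and $\delta^2/(2b)$ otherwise, where $b=\|s^k_{j_{k+1}}-g^k_{j_{k+1}}\|_{D^{-1}}^2$; both cases are summarized by $\tfrac12\,\delta\,\min\{1,\delta/b\}$. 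Finally, to recover the stated constants, I would prove $\delta\ge(1-\beta)v_k/(N-1)$: since the descent condition \eqref{eqn: n_new_rule1} fails at a null step, $F(x^k)-F(z^k_{j_k})<\beta v_k$, hence $F(z^k_{j_k})-\tilde F^k(z^k_{j_k})\ge(1-\beta)v_k$, and the selection rule \eqref{j-selection} makes $\delta$ at least the average $\tfrac1{N-1}\big(F(z^k_{j_k})-\tilde F^k(z^k_{j_k})\big)$. Because $t\mapsto t\,\min\{1,t/b\}$ is nondecreasing, replacing $\delta$ by this lower bound produces exactly $\tfrac{1-\beta}{2(N-1)}\bar\mu_k v_k$ with $\bar\mu_k$ as in \eqref{muk-def}.

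The hard part will be the aggregate-model identity $\hat\psi(x)=\eta^k+\tfrac12\|x-z^k_{j_k}\|_D^2$: it is the one place where the specific definition \eqref{subgradient-selection} of $g^k_{j_k}$ is genuinely used, and it converts the whole two-iteration comparison into a transparent quadratic-plus-nonnegative-convex-residual problem. Once that identity, together with the value, nonnegativity, and subgradient data of $r$, is established, the remainder—the clamped quadratic minimization and the null-step gap estimate—is routine, the only care being the two cases of the minimum and the monotonicity that lets the averaged lower bound on $\delta$ be substituted.
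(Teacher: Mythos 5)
Your proof is correct, and it reaches the paper's bound by a noticeably different technical route, even though the two arguments share the same skeleton (linearize $f_{j_k}$ via \eqref{subgradient-selection}, bring in a second minorant of $f_{j_{k+1}}$ at $z^k_{j_k}$, then use max-at-least-average from \eqref{j-selection} and the failed test \eqref{eqn: n_new_rule1}). The paper records only that the full linearization leaves the optimal value unchanged (its equation \eqref{etak-all-lin}) and then runs a homotopy argument: it forms the parametric relaxation $\hat{Q}_k(\mu)$ in \eqref{eqn: f2_relax}, computes the minimizer $\hat{x}(\mu)$ explicitly, bounds the derivative $\hat{Q}'_k(\mu)$ (implicitly invoking a Danskin-type formula for derivatives of optimal-value functions), and integrates over $[0,\mu_k]$. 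You instead upgrade the first step to the exact identity $\hat\psi(x)=\eta^k+\tfrac12\|x-z^k_{j_k}\|_D^2$, write the $(k{+}1)$st objective as $\hat\psi+r$ with the nonnegative convex residual $r=f_{j_{k+1}}-\tilde f^k_{j_{k+1}}$, minorize $r$ by the clamped affine function $\max\{0,\delta+\langle s^k_{j_{k+1}}-g^k_{j_{k+1}},\,x-z^k_{j_k}\rangle\}$, and solve the resulting piecewise-quadratic minimization in closed form. Since your clamped minorant is exactly $\max_{\mu\in[0,1]}\mu\bigl(\delta+\langle s^k_{j_{k+1}}-g^k_{j_{k+1}},\,x-z^k_{j_k}\rangle\bigr)$, your bound is the $\min_x\max_\mu$ counterpart of the paper's $\max_\mu\min_x$; by weak duality it is at least as strong, and both collapse to the same constant $\tfrac{1-\beta}{2(N-1)}\bar\mu_k v_k$. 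What your version buys is the elimination of the envelope-theorem differentiation and the integration step, making the argument more self-contained and static; what the paper's version buys is that the convex-combination relaxation (and hence the provenance of $\bar\mu_k$ as an optimal mixing weight) is made explicit. One cosmetic remark: your minimum value in the case $\delta\ge b$ is $\delta-\tfrac12 b$, which you then relax to $\tfrac12\delta$ to get the unified form $\tfrac12\delta\min\{1,\delta/b\}$; stating that this is an inequality (not an equality) in that branch, and noting as the paper does that $b=\|s^k_{j_{k+1}}-g^k_{j_{k+1}}\|^2_{D^{-1}}>0$ at a null step because $f_{j_{k+1}}(z^k_{j_k})>\tilde f^k_{j_{k+1}}(z^k_{j_k})$, would make the write-up airtight.
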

\begin{proof}
The change from the $f_{j_k}$-subproblem  to the $f_{j_{k+1}}$-subproblem can be viewed as two steps: first is the change of $f_{j_k}(\cdot)$ to $\tilde{f}^k_{j_k}(\cdot)$, followed by the change of $\tilde{f}^k_{j_{k+1}}(\cdot)$ to $f_{j_{k+1}}(\cdot)$. By the selection of the subgradient \eqref{subgradient-selection} and the
resulting
construction of $\tilde{f}^k_{j_k}(\cdot)$, the first operation does not change the solution and the optimal value of the subproblem. Thus the optimal value of \eqref{subproblem} satisfies the following equation:
\begin{equation}
\label{etak-all-lin}
\eta^k =
\min_x \sum_{i=1}^N\tilde{f}^{k}_i(x) + \frac{1}{2}\big\|x-x^k\big\|_D^2.
\end{equation}
Since $x^{k+1} = x^k$ at a null step, and $f_{j_{k+1}} \geq \tilde{f}^k_{j_{k+1}}$,
the second operation can only increase the optimal value of the last problem. Therefore, $\eta^{k+1} \ge \eta^k$.

Consider the family of relaxations of the $f_{j_{k+1}}$-subproblem at iteration $k+1$:
\begin{equation}\label{eqn: f2_relax}
\begin{split}
\hat{Q}_k(\mu)=\min_x \biggl\{ &\sum_{i\neq j_{k+1}}\tilde{f}^{k+1}_i(x) + (1-\mu)\left(f_{j_{k+1}}(z^{k}_{j_{k+1}}) + \langle g^{k}_{j_{k+1}}, x - z^{k}_{j_{k+1}}\rangle\right)\\
                &+ \mu\left(f_{j_{k+1}}(z^k_{j_k}) + \langle s_{j_{k+1}}^k, x-z^k_{j_k}\rangle\right) +  \frac{1}{2}\big\|x-x^k\big\|_D^2\biggr\},
\end{split}
\end{equation}
with parameter $\mu\in [0,1]$. In the above relaxation, the function $f_{j_{k+1}}(\cdot)$ is replaced by a convex combination of its two affine minorants: one at the point $z^{k}_{j_{k+1}}$, which is $\tilde{f}^k_{j_{k+1}}(\cdot)$ used at itera\-tion~$k$,
and the other one at the $k$th trial point $z^k_{j_k}$, with an arbitrary subgradient
$s_{j_{k+1}}^k$.
Due to \eqref{etak-all-lin},  the value of (\ref{eqn: f2_relax}) with $\mu = 0$ coincides with~$\eta^k$.
Therefore, the difference between $\eta^{k+1}$ and $\eta^k$ can be estimated from below by the increase in the optimal value $\hat{Q}_k(\mu)$ of (\ref{eqn: f2_relax}) when $\mu$ moves away from zero. That is,
\begin{equation}
\eta^{k+1} - \eta^k \geq \max_{\mu\in[0,1]}\hat{Q}_k(\mu) - \hat{Q}_k(0).
\end{equation}

Define $\delta_k = F(z^k_{j_k}) - \tilde{F}^k(z^k_{j_k})$. Note that $\delta_k \geq 0$, since $f_i \geq \tilde{f}^{k}_i$ for $i\neq j_k$. We also define ${\mu}_k = \min\Big\{1, \frac{\delta_k}{(N-1)\big\|s_{j_{k+1}}^k - g^{k}_{j_{k+1}}\big\|^2_{D^{-1}}}\Big\}$, so $ {\mu}_k\in[0,1]$. By direct calculation,
and with the use of \eqref{subgradient-selection}, the solution of (\ref{eqn: f2_relax}) has the form
\[
\hat{x}(\mu)=x^k-D^{-1}\Big[\sum_{i=1}^N g_i^{k} + \mu\big(s_{j_{k+1}}^k-g^{k}_{j_{k+1}}\big)\Big]
= z^k_{j_k}+ \mu D^{-1}\big(s_{j_{k+1}}^k-g^{k}_{j_{k+1}}\big).
 \]
Using the definitions following (\ref{eqn: f2_relax}) and the fact that $\hat{x}(0) = z^k_{j_k} $, the derivative of $\hat{Q}_k$ can be expressed as follows:
\begin{equation}
\begin{aligned}
\hat{Q}'_k(\mu) &= \langle s_{j_{k+1}}^k - g^{k}_{j_{k+1}}, \hat{x}(\mu) \rangle\\
&{\qquad} + \left(f_{j_{k+1}}(z^k_{j_k}) - \langle s_{j_{k+1}}^k, z^k_{j_k}\rangle\right) - \left(f_{j_{k+1}}(z^{k}_{j_{k+1}}) - \langle g^{k}_{j_{k+1}}, z^{k}_{j_{k+1}}\rangle\right) \\
&= \langle s_{j_{k+1}}^k - g^{k}_{j_{k+1}}, \hat{x}(\mu) - z^k_{j_k} \rangle \\
&{\qquad} + f_{j_{k+1}}(z^k_{j_k}) - \left(f_{j_{k+1}}(z^{k}_{j_{k+1}}) + \langle g^{k}_{j_{k+1}}, z^k_{j_k} - z^{k}_{j_{k+1}}\rangle\right) \\
&\geq \langle s_{j_{k+1}}^k - g^{k}_{j_{k+1}}, \hat{x}(\mu) - z^k_{j_k} \rangle + \frac{F(z^k_{j_k}) - \tilde{F}^k(z^k_{j_k})}{N-1}\\
&= -\mu \big\|s_{j_{k+1}}^k - g^{k}_{j_{k+1}}\big\|^2_{D^{-1}} + \frac{\delta_k}{N-1}.
\end{aligned}
\end{equation}
In the inequality above, we used the definition \eqref{j-selection}  of $j_{k+1}$ and the fact that the maximum of the
differences $f_{j}(z^k_{j_k})-\tilde{f}^k_{j}(z^k_{j_k})$ over $j\ne j_{k}$ is larger than their average.
Thus
\begin{equation}
\hat{Q}_k({\mu}_k)-\hat{Q}_k(0)= \int^{{\mu}_k}_0 \hat{Q}'_k(\mu)\,d\mu \ge  {\mu}_k\left(\frac{\delta_k}{N-1} - \frac{1}{2}{\mu}_k\big\|s_{j_{k+1}}^k - g^{k}_{j_{k+1}}\big\|^2_{D^{-1}}\right).
\end{equation}
Substitution of the definition of ${\mu}_k$ yields
\begin{equation}\label{eqn: gap}
\eta^{k+1} \geq \eta^k + \frac{{\mu}_k\delta_k}{2(N-1)}.
\end{equation}
If a null step is made at iteration $k$, then the update step rule (\ref{eqn: n_new_rule1}) is violated.
Thus, $\delta_k = F(z^k_{j_k}) - \tilde{F}^k(z^k_{j_k}) > (1 - \beta)v_k.$
Plugging this lower bound on $\delta_k$ into (\ref{eqn: gap}) and using the definition of $\bar{\mu}_k$, we obtain the
postulated bound \eqref{eta-increase}.

Finally, we  remark that $s_{j_{k+1}}^k \ne g^{k}_{j_{k+1}}$, because $f_{j_{k+1}}(z^k_{j_k})>\tilde{f}^k_{j_{k+1}}(z^k_{j_k})$.
\end{proof}

We also need to estimate the size of the steps made by the method.
\begin{lemma}
\label{l:radius}
At every iteration $k$,
\begin{equation}
\label{radius2}
\frac{1}{2}\big\|z^k_{j_k} - \prox_F(x^{k})\big\|^2_D \le F_D(x^{k}) - \eta^k.
\end{equation}
\end{lemma}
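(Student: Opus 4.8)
The plan is to exploit the strong convexity of the subproblem objective. Write
\[
\psi_k(x) = \tilde{F}^k(x) + \frac{1}{2}\big\|x-x^k\big\|^2_D ,
\]
so that $z^k_{j_k}$ is the minimizer of $\psi_k$ with $\psi_k(z^k_{j_k}) = \eta^k$. The function $\tilde{F}^k(\cdot)$ is convex, being the sum of the convex $f_{j_k}$ and the affine minorants $\tilde{f}^k_i$, while the quadratic term has ``Hessian'' $D$. Hence $\psi_k$ is strongly convex with modulus $1$ relative to $\|\cdot\|_D$. Concretely, the optimality condition for \eqref{subproblem} provides a subgradient $g\in\partial\tilde{F}^k(z^k_{j_k})$ with $g + D(z^k_{j_k}-x^k)=0$; combining the subgradient inequality for $\tilde{F}^k$ with the exact second-order expansion of the quadratic term, whose remainder is precisely $\frac{1}{2}\|x-z^k_{j_k}\|^2_D$, I would obtain
\[
\psi_k(x) \ge \eta^k + \frac{1}{2}\big\|x-z^k_{j_k}\big\|^2_D \qquad\text{for all } x .
\]

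Next I would evaluate this inequality at the exact proximal point $p := \prox_F(x^k)$, which yields
\[
\tilde{F}^k(p) + \frac{1}{2}\big\|p-x^k\big\|^2_D \ge \eta^k + \frac{1}{2}\big\|p - z^k_{j_k}\big\|^2_D .
\]
Then I would invoke the property, already noted in the excerpt, that each $\tilde{f}^k_i$ is a minorant of $f_i$, so that $\tilde{F}^k(\cdot)\le F(\cdot)$. Consequently the left-hand side is bounded above by $F(p) + \frac{1}{2}\|p-x^k\|^2_D$, which by the definition of $p$ and \eqref{MY-regularization} equals exactly $F_D(x^k)$. Chaining these two facts gives
\[
F_D(x^k) \ge \eta^k + \frac{1}{2}\big\|z^k_{j_k}-p\big\|^2_D ,
\]
which is precisely the claimed bound \eqref{radius2} after rearrangement.

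I expect the only delicate point to be the strong-convexity inequality itself: one must take care that the modulus is exactly $1$ in the $\|\cdot\|_D$ norm rather than the Euclidean norm, and that the quadratic expansion is an \emph{identity} (not merely an inequality), so that no constant is lost in the remainder term. The remaining ingredients---the minorant property $\tilde{F}^k\le F$ and the definition of $F_D$---are immediate from the preceding material, so the argument is short once the strong-convexity step is set up correctly.
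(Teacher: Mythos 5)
Your proposal is correct and follows essentially the same route as the paper: both arguments combine the minorant property $\tilde{F}^k\le F$ with the strong-convexity growth inequality $\psi_k(x)\ge\eta^k+\frac{1}{2}\|x-z^k_{j_k}\|^2_D$ at the subproblem minimizer, evaluated at $\prox_F(x^k)$; the paper simply invokes the latter inequality directly (citing strong convexity of the subproblem), whereas you derive it from the optimality condition and the exact quadratic expansion. The only difference is the order of chaining the two inequalities, which is immaterial.
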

\begin{proof}
Since $F(\cdot) \ge \tilde{F}^k(\cdot)$ and $z^k_{j_k}$ is a solution of the strongly convex problem \eqref{subproblem}, we have
\begin{equation}
\label{estm-dist}
\begin{aligned}
F_D(x^{k}) &= F\big(\prox_F(x^{k})\big) +  \frac{1}{2}\big\|\prox_F(x^{k})-x^{k}\big\|^2_D\\
&\ge
\tilde{F}^k\big(\prox_F(x^{k})\big)  +  \frac{1}{2}\big\|\prox_F(x^{k})-x^{k}\big\|^2_D\\
&\ge
\tilde{F}^k(z^k_{j_k}) +  \frac{1}{2}\big\|z^k_{j_k}-x^{k}\big\|^2_D + \frac{1}{2}\big\|z^k_{j_k} - \prox_F(x^{k})\big\|^2_D\\
&= \eta^k + \frac{1}{2}\big\|z^k_{j_k} - \prox_F(x^{k})\big\|^2_D.
\end{aligned}
\end{equation}
Rearranging, we obtain \eqref{radius2}.
\end{proof}

We are now ready to prove optimality in the case of finitely many descent steps.
\begin{theorem}
\label{t:finite-descent}
Suppose $\varepsilon=0$, the set $\mcK = \{1\} \cup \{k>1: x^{k} \neq x^{k-1}\}$ is finite  and $\inf F > -\infty$. Let $m \in \mcK$ be the largest index such that $x^{m}\neq x^{m-1}$. Then $x^{m} \in \text{\rm Argmin}\,F$.
\end{theorem}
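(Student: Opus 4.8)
The plan is to reduce the claim to a statement about the limiting behaviour of the subproblem values $\eta^k$ and the gaps $v_k$ along the tail of null steps. Since $\mcK$ is finite and $m$ is its largest element, no descent step occurs at any iteration $k\ge m$, so the proximal center is frozen: $x^k=x^m$ for all $k\ge m$, and every such iteration is a null step. Because $\prox_F(x^m)=x^m$ is exactly the optimality condition $0\in\partial F(x^m)$ for the strongly convex problem defining $\prox_F$, and because $\prox_F(x^m)=x^m$ holds if and only if $F_D(x^m)=F(x^m)$ (the unique minimizer of $F(\cdot)+\frac12\|\cdot-x^m\|_D^2$ coincides with $x^m$ precisely when its value at $x^m$ attains the minimum $F_D(x^m)$), it suffices to prove the single identity $F_D(x^m)=F(x^m)$.

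First I would show that $\{\eta^k\}_{k\ge m}$ converges. By Lemma~\ref{lem: increment_bound}, each null step gives $\eta^{k+1}\ge\eta^k+\frac{1-\beta}{2(N-1)}\bar\mu_k v_k$, so the sequence is nondecreasing for $k\ge m$. Since $\tilde{F}^k\le F$, we have $\eta^k=\min_x\tilde{F}^k(x)+\frac12\|x-x^m\|_D^2\le F_D(x^m)$, a fixed upper bound. Hence $\eta^k\uparrow\eta^\ast\le F_D(x^m)$, the increments $\eta^{k+1}-\eta^k$ tend to $0$, and therefore $\bar\mu_k v_k\to0$.

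The main obstacle is converting $\bar\mu_k v_k\to0$ into $v_k\to0$, which requires a uniform bound on the denominator $\|s^k_{j_{k+1}}-g^k_{j_{k+1}}\|_{D^{-1}}^2$ appearing in $\bar\mu_k$. For this I would invoke Lemma~\ref{l:radius}: since $x^k=x^m$ for $k\ge m$, it yields $\frac12\|z^k_{j_k}-\prox_F(x^m)\|_D^2\le F_D(x^m)-\eta^k\le F_D(x^m)-\eta^m$, so all trial points $z^k_{j_k}$ lie in a fixed ball $B$. As each $f_i$ is a finite convex function on $\RR^n$, its subdifferential is bounded on $B$; together with the finitely many early linearization points (those fixed before iteration $m$), this bounds every $s^k_{j_{k+1}}\in\partial f_{j_{k+1}}(z^k_{j_k})$ and every stored $g^k_{j_{k+1}}\in\partial f_{j_{k+1}}(z^k_{j_{k+1}})$ uniformly, say with the denominator $\le\bar C$. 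A contradiction argument then finishes: if $v_k\not\to0$, some subsequence has $v_k\ge\delta>0$, whence $\bar\mu_k v_k\ge\min\{\delta,\frac{(1-\beta)\delta^2}{(N-1)\bar C}\}>0$, contradicting $\bar\mu_k v_k\to0$. Thus $v_k\to0$.

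Finally I would assemble the pieces. Writing $\eta^k=\tilde{F}^k(z^k_{j_k})+\frac12\|z^k_{j_k}-x^m\|_D^2=F(x^m)-v_k+\frac12\|z^k_{j_k}-x^m\|_D^2$ and letting $k\to\infty$ gives $\frac12\|z^k_{j_k}-x^m\|_D^2\to\eta^\ast-F(x^m)$; since the left side is nonnegative, $\eta^\ast\ge F(x^m)$. Combined with $\eta^\ast\le F_D(x^m)\le F(x^m)$, this forces $\eta^\ast=F_D(x^m)=F(x^m)$, the desired identity, and hence $x^m\in\mathrm{Argmin}\,F$. I expect the only delicate point to be the uniform subgradient bound, which rests on confining the trial points to a bounded region via Lemma~\ref{l:radius} and on the local boundedness of subdifferentials of finite-valued convex functions.
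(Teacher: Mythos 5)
Your proposal is correct and follows essentially the same route as the paper's proof: both rely on Lemma \ref{lem: increment_bound} for the monotone increase of $\eta^k$, on Lemma \ref{l:radius} plus local boundedness of subdifferentials to bound the denominators in $\bar{\mu}_k$, conclude $v_k \to 0$, and then squeeze $\eta^k$ between $F(x^m)-v_k$ and $F_D(x^m)\le F(x^m)$ --- the paper merely packages this as a contradiction with $F_D(x^m)<F(x^m)$, while you argue directly toward $F_D(x^m)=F(x^m)$. The only detail you leave implicit is that the method might stop at some $k\ge m$ (which with $\varepsilon=0$ means $v_k=0$), a case the paper's contradiction framing rules out explicitly; in your framework it is immediate, since then $F(x^m)=\tilde{F}^k(z^k_{j_k})\le \eta^k \le F_D(x^m)\le F(x^m)$ gives the desired identity at once.
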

\begin{proof}
We argue by contradiction. Suppose $x^{m} \notin {\rm Argmin}\, F$. If $\varepsilon=0$ the method cannot stop, because
$\tilde{F}^k(z^k_{j_k}) \le F(\prox_F(x^m)) < F(x^m)$, for all $k \ge m$.
Therefore, null steps are made at all iterations $k \ge m$, with $x^k = x^{m}$.
By Lemma \ref{lem: increment_bound}, the sequence $\{\eta^k\}$ is nondecreasing and bounded above by $F(x^{m})$. Hence $\eta^{k+1} - \eta^k \rightarrow 0$. The right hand side of estimate \eqref{radius2} with $x^k=x^m$ for $k\ge m$,
owing to the monotonicity of $\{\eta^k\}$, is nonincreasing, and thus
the sequence $\{z^k_{j_k}\}$ is bounded. Since the subgradients of a convex function are locally bounded,
 the differences $\big\|s_{j_{k+1}}^k - g^{k}_{j_{k+1}}\big\|_{D^{-1}}$ appearing in the definition of $\bar{\mu}_k$ in Lemma \ref{lem: increment_bound} are bounded from above. Therefore,
$v_k \rightarrow 0$. As $F(x^{m}) \geq \eta^k \geq F(x^{m}) -v_k$, we have $\eta^k \uparrow F(x^{m})$.

On the other hand, the inequality $\tilde{F}^k(\cdot) \le F(\cdot)$ implies that
$\eta^k \leq F_{D}(x^{m})$ for all $k \geq m$. Since $x^{m} \notin {\rm Argmin}F$, we have $F_{D}(x^{m}) < F(x^{m})$,
which contradicts the convergence of $\{\eta^k\}$ to $F(x^{m})$.
\end{proof}

We now address the infinite case. Note that the update test (\ref{eqn: n_new_rule1}) can be expressed as follows:
\begin{equation}
\label{eqn: n_new_rule2}
\tilde{F}^k(z^k_{j_k}) \geq -\frac{1}{\beta}F(z^k_{j_k}) + \frac{1-\beta}{\beta}F(x^k).
\end{equation}
\begin{theorem}
\label{t:infinite-descent}
Suppose ${\rm Argmin}\,F\ne \emptyset$. If the set $\mcK = \{k: x^{k+1} \neq x^k\}$ is infinite,
then $\lim_{k \rightarrow \infty}x^k = x^*$, for some $x^* \in {\rm Argmin}\,F$.
\end{theorem}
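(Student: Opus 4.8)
The plan is to establish two properties of $\{x^k\}$ and combine them in the standard way: \emph{quasi-Fej\'er monotonicity} with respect to ${\rm Argmin}\,F$, and \emph{asymptotic optimality} of every cluster point. Fix an arbitrary $\hat{x}\in{\rm Argmin}\,F$. Since $z^k_{j_k}$ minimizes the function $x\mapsto \tilde{F}^k(x)+\tfrac12\|x-x^k\|_D^2$, which is strongly convex with modulus $1$ in the $D$-norm, evaluating the strong-convexity inequality at its minimizer and at $x=\hat{x}$, and using $\tilde{F}^k\le F$, gives
\[
\eta^k + \tfrac12\big\|\hat{x}-z^k_{j_k}\big\|_D^2 \le \tilde{F}^k(\hat{x}) + \tfrac12\big\|\hat{x}-x^k\big\|_D^2 \le F(\hat{x}) + \tfrac12\big\|\hat{x}-x^k\big\|_D^2 .
\]
At a descent step $x^{k+1}=z^k_{j_k}$, so after rearranging, and using $\eta^k \ge F(x^k)-v_k$ together with $F(\hat{x})\le F(x^k)$, I expect to obtain $\tfrac12\|x^{k+1}-\hat{x}\|_D^2 \le \tfrac12\|x^k-\hat{x}\|_D^2 + v_k$; at a null step $x^{k+1}=x^k$, so the same bound holds trivially without the $v_k$ term.

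Next I would control $\sum_k v_k$. The descent test \eqref{eqn: n_new_rule1} yields $F(x^{k+1})\le F(x^k)-\beta v_k$ at descent steps and $F(x^{k+1})=F(x^k)$ at null steps; telescoping and using that $\{F(x^k)\}$ is nonincreasing and bounded below by $\min F>-\infty$ (as ${\rm Argmin}\,F\ne\emptyset$) gives $\sum_{k\in\mcK}v_k \le \beta^{-1}\big(F(x^1)-\min F\big)<\infty$. Consequently the sequence is quasi-Fej\'er monotone with respect to ${\rm Argmin}\,F$, so $\|x^k-\hat{x}\|_D$ converges for every minimizer $\hat{x}$, the sequence $\{x^k\}$ is bounded, and $v_k\to 0$ as $k\to\infty$ within $\mcK$.

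The decisive step is showing that cluster points are optimal, and here I would lean on Lemma~\ref{l:radius}. From $\eta^k = F(x^k)-v_k+\tfrac12\|z^k_{j_k}-x^k\|_D^2$ and $\eta^k\le F_D(x^k)\le F(x^k)$ I get $\tfrac12\|z^k_{j_k}-x^k\|_D^2\le v_k$, and combining $F_D(x^k)-\eta^k\le F(x^k)-\eta^k\le v_k$ with Lemma~\ref{l:radius} gives $\tfrac12\|z^k_{j_k}-\prox_F(x^k)\|_D^2\le v_k$. Let $\bar{x}$ be any cluster point of $\{x^k\}$; since $x^k$ changes only at descent steps, $\bar{x}$ is a limit of $z^k_{j_k}$ along a subsequence drawn from $\mcK$, along which $v_k\to0$, so both $x^k\to\bar{x}$ and $\prox_F(x^k)\to\bar{x}$. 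Writing the proximal optimality condition $D\big(x^k-\prox_F(x^k)\big)\in\partial F\big(\prox_F(x^k)\big)$ and passing to the limit via closedness of the graph of $\partial F$ yields $0\in\partial F(\bar{x})$, i.e. $\bar{x}\in{\rm Argmin}\,F$.

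Finally, applying the quasi-Fej\'er inequality of the first paragraph to this specific minimizer $\bar{x}$, the convergent sequence $\|x^k-\bar{x}\|_D$ has a subsequence tending to $0$, and therefore its whole limit is $0$; hence $x^k\to\bar{x}=:x^\ast\in{\rm Argmin}\,F$. I expect the main obstacle to be the transition from the vanishing primal gaps $v_k$ to genuine proximity to the proximal point and then to optimality: Lemma~\ref{l:radius} is exactly the tool that bridges this, while the delicate bookkeeping is ensuring that every cluster point of the full sequence is actually attained along the descent index set $\mcK$, so that the estimate $v_k\to0$ can be invoked there.
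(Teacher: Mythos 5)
Your proof is correct, and it takes a genuinely different route from the paper's. Both arguments are Fej\'er-type: the paper derives its key recursion from the subgradient optimality condition $D(x^k-x^{k+1})\in\partial\tilde{F}^k(x^{k+1})$ together with the descent test rewritten as \eqref{eqn: n_new_rule2}; the resulting error terms are function-value differences that telescope, and the crucial byproduct is $\sum_{k\in\mcK}\big(F(x^k)-F(x^*)\big)<\infty$, so that $F(x^k)\to F(x^*)$ and cluster points are minimizers simply by continuity of $F$; whole-sequence convergence then comes from restarting the summed inequality at a late index $\ell$ chosen on a convergent subsequence. You instead derive a quasi-Fej\'er inequality from the strong convexity of the subproblem (the same device the paper uses inside Lemma~\ref{l:radius}, but aimed at an arbitrary minimizer $\hat{x}$ rather than at $\prox_F(x^k)$), with errors $v_k$ made summable by the descent test \eqref{eqn: n_new_rule1}; since your error terms carry no direct information about $F(x^k)-F(x^*)$, you need a separate argument for optimality of cluster points, which you supply via Lemma~\ref{l:radius}, the proximal optimality condition $D\big(x^k-\prox_F(x^k)\big)\in\partial F\big(\prox_F(x^k)\big)$, and closedness of the graph of $\partial F$ --- all legitimate here since $F$ is finite-valued convex on $\Rb^n$. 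Your bookkeeping point (that every cluster point of the full sequence is attained along $\mcK$, where $v_k\to 0$) is handled correctly. What each approach buys: the paper's route yields function-value convergence $F(x^k)\to\min F$ directly, which it reuses later (Corollary~\ref{c:summary}), and needs only continuity of $F$; your route is a more modular, textbook quasi-Fej\'er argument that additionally gives $z^k_{j_k}\to x^*$ and $\prox_F(x^k)\to x^*$ along descent steps as byproducts. One possible streamlining of your cluster-point step: from $F(x^k)-F_D(x^k)\le v_k\to 0$ along $\mcK$ and continuity of $F-F_D$, any cluster point $\bar{x}$ satisfies $F(\bar{x})=F_D(\bar{x})$, hence is a minimizer, avoiding the appeal to graph closedness of $\partial F$ altogether.
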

\begin{proof}
Consider iteration $k \in \mcK$ (descent step). From the optimality condition for \eqref{subproblem}  we obtain
\begin{equation}
0 \in \partial \Big[ f_{j_k}(z^k_{j_k}) + \sum_{i\neq j_k}\tilde{f}^{k}_i(z^k_{j_k})\Big] + D\big(z^k_{j_k} - x^k\big),
\end{equation}
which yields
\begin{equation}\label{6-bundle-optimality}
 D(x^k - x^{k+1})\in\partial\Big[ f_{j_k}(z^k_{j_k})+ \sum_{i\neq j_k}\tilde{f}^{k}_i(z^k_{j_k})\Big].
\end{equation}
Then for any point $x^*\in {\rm Argmin}\,F $ we obtain
\begin{align}
\label{6-bundle-subdif}
F(x^*) \geq \tilde{F}^k(x^*) \geq \tilde{F}^k(x^{k+1}) + \big\langle D(x^k-x^{k+1}), x^*-x^{k+1}\big\rangle.
\end{align}
Hence
\begin{align*}
\big\|x^{k+1}-x^*\big\|_D^2 & = \big\|x^k-x^*\big\|_D^2 + 2\big\langle D(x^{k+1}-x^k), x^k-x^*\big\rangle +
\big\|x^{k+1}-x^k\big\|_D^2\\
&\leq \big\|x^k-x^*\big\|_D^2 + 2\big\langle D(x^{k+1}-x^k), x^{k+1}-x^*\big\rangle\\
&\leq \big\|x^k - x^*\big\|_D^2 + 2\big(F(x^*) - \tilde{F}^k(x^{k+1})\big).
\end{align*}
Using \eqref{eqn: n_new_rule2}, we can continue this chain of inequalities as follows
\begin{equation}\label{6-bundle-scalarproduct}
\begin{aligned}
\big\|x^{k+1}-x^*\big\|_D^2 & \le
\big\|x^k - x^*\big\|_D^2 + 2\big(F(x^*) - \frac{1}{\beta}F(x^{k+1}) + \frac{1-\beta}{\beta}F(x^k)\big) \\
&=\big\|x^k-x^*\big\|_D^2 + 2\big(F(x^*) - F(x^k)\big) + \frac{2}{\beta}\big(F(x^k) -F(x^{k+1})\big).
\end{aligned}
\end{equation}
Thus, adding up (\ref{6-bundle-scalarproduct}) for all $k\in \mcK$, $k\leq m$, and noting that the null steps do not change the proximal centers, we obtain
\begin{equation}\label{6-bundle-key}
\big\|x^{m+1} - x^*\big\|_D^2 \leq \big\|x^1 - x^*\big\|_D^2 + 2\sum_{{k\in \mcK}\atop{ k\leq m}}\big(F(x^*) - F(x^{k})\big) + \frac{2}{\beta}\sum_{{k\in\mcK}\atop{ k\leq m}}\big(F(x^{k}) - F(x^{k+1})\big).
\end{equation}
The term $2\sum_{k\in\mcK, k\leq m}\big(F(x^*) - F(x^{k})\big)$ is non-positive, and the last term is bounded by $\frac{2}{\beta}\big(F(x^1)-F(x^*)\big)$. 
Thus, several conclusions follow from inequality \eqref{6-bundle-key}. First, the sequence $\{x^k\}_{k\in\mcK}$ is bounded, because their distances to $x^*$ are bounded. Secondly, rewriting (\ref{6-bundle-key}) as
\[
\sum_{k\in\mcK,k\leq m}\big(F(x^{k}) - F(x^*)\big) \leq \frac{1}{2}\big(\big\|x^1 - x^*\big\|_D^2 - \big\|x^{m+1} - x^*\big\|^2_D\big) + \frac{1}{\beta}\big(F(x^1) - F(x^{m+1})\big),
\]
and letting $m\rightarrow \infty$ in $\mcK$, we deduce that
\begin{equation}\label{6-penalty-sumup}
\sum_{k\in\mcK}\left(F(x^{k})-F(x^*)\right)\leq \frac{1}{2}\big\|x^1-x^*\big\|^2_D + \frac{1}{\beta}\big(F(x^1)-F(x^*)\big).
\end{equation}
Consequently, $F(x^k) \rightarrow F(x^*)$ as $k\rightarrow \infty$ in $\mcK$. As the null steps do not change the proximal centers, we also have $F(x^k)\rightarrow F(x^*)$, when $k\rightarrow \infty$.

To prove that the sequence of proximal centers converges to an optimal solution, note that since the infinite sequence $\{x^k\}_{k\in\mcK}$ is bounded, it has a convergent subsequence whose limit $\hat{x}$ is a minimizer of $F$.
 Without loss of generality, we  substitute $\hat{x}$ for $x^*$ in the above derivations,
and add (\ref{6-bundle-scalarproduct}) for all $k\in \mcK$ such that $\ell \le k\leq m$.
For any $1 \le \ell \le m$ we obtain the following analog of \eqref{6-bundle-key}:
\begin{align*}
\big\|x^{m+1} - \hat{x}\big\|_D^2 &\leq \big\|x^{\ell} - \hat{x}\big\|_D^2 + 2\sum_{{k\in \mcK}\atop{ k\leq m}}\big(F(\hat{x}) - F(x^{k})\big) + \frac{2}{\beta}\sum_{{k\in\mcK}\atop{ k\leq m}}\big(F(x^{k}) - F(x^{k+1})\big) \\
&\leq \big\|x^{\ell} - \hat{x}\big\|_D^2 + \frac{2}{\beta}\big(F(x^{\ell}) - F(\hat{x})\big).
\end{align*}
The right hand side of the last inequality can be made arbitrarily small by choosing $\ell$ from the subsequence converging to $\hat{x}$. Therefore the entire sequence $\{x^k\}_{k\in\mcK}$ is convergent to $\hat{x}$.
\end{proof}

We finish this section with a number of conclusions, which will be useful in the analysis of the rate of convergence.

\begin{lemma}
\label{l:eta-in-descent} If there is a descent step at iteration $k$, then
\begin{equation}
\label{eta-descent}
\eta^{k+1} - \eta^{k} \ge - \big\|x^{k+1}-x^k\big\|^2_D  \ge   \frac{1}{\beta} \big( F(x^{k+1}) - F(x^k)\big).
\end{equation}
\end{lemma}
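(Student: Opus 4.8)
The plan is to prove the two inequalities separately, both resting on the all-linearized representation \eqref{etak-all-lin} of $\eta^k$ established inside the proof of Lemma~\ref{lem: increment_bound}. Write $\hat{F}^k(x)=\sum_{i=1}^N \tilde f^k_i(x)$ for the fully linearized model. This model is \emph{affine}, and by the subgradient selection \eqref{subgradient-selection} it satisfies $\eta^k=\min_x\{\hat F^k(x)+\frac12\|x-x^k\|_D^2\}$ with minimizer $z^k_{j_k}=x^{k+1}$. Its constant gradient is $a:=\sum_{i=1}^N g^k_i$, the optimality condition of the iteration-$k$ subproblem reads $a=-D(x^{k+1}-x^k)$, and evaluating the objective at $x^{k+1}$ gives the identity $\eta^k=\hat F^k(x^{k+1})+\frac12\|x^{k+1}-x^k\|_D^2$. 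These relations will be the workhorses for both parts.

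For the first inequality I would relate $\eta^{k+1}$ to the \emph{same} affine model but with the shifted center $x^{k+1}$. By Step~4 we have $\tilde f^{k+1}_i=\tilde f^k_i$ for all $i\ne j_{k+1}$, whereas for $i=j_{k+1}$ the function is restored to $f_{j_{k+1}}\ge \tilde f^k_{j_{k+1}}$; hence $\tilde F^{k+1}(\cdot)\ge \hat F^k(\cdot)$ pointwise, and therefore $\eta^{k+1}\ge \min_x\{\hat F^k(x)+\frac12\|x-x^{k+1}\|_D^2\}$. Since $\hat F^k$ is affine with gradient $a=-D(x^{k+1}-x^k)$, this last minimization has the closed-form solution $\bar x=2x^{k+1}-x^k$, and a short computation gives its optimal value as $\hat F^k(x^{k+1})-\frac12\|x^{k+1}-x^k\|_D^2=\eta^k-\|x^{k+1}-x^k\|_D^2$, yielding $\eta^{k+1}\ge\eta^k-\|x^{k+1}-x^k\|_D^2$.

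For the second inequality I would combine the descent test with a strong-convexity bound on $\eta^k$. Rewriting \eqref{eqn: n_new_rule1} with $x^{k+1}=z^k_{j_k}$ gives $\frac1\beta\big(F(x^{k+1})-F(x^k)\big)\le \tilde F^k(x^{k+1})-F(x^k)$, so it suffices to show $\tilde F^k(x^{k+1})-F(x^k)\le -\|x^{k+1}-x^k\|_D^2$. Using $\tilde F^k(x^{k+1})=\hat F^k(x^{k+1})$ (the two agree at $z^k_{j_k}$ since $\tilde f^k_{j_k}(z^k_{j_k})=f_{j_k}(z^k_{j_k})$) together with the identity for $\eta^k$ above, this reduces to $\eta^k\le F(x^k)-\frac12\|x^{k+1}-x^k\|_D^2$. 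Because the iteration-$k$ objective $\phi(x)=\tilde F^k(x)+\frac12\|x-x^k\|_D^2$ is convex plus the regularizer $\frac12\|x-x^k\|_D^2$, with minimizer $x^{k+1}$, the standard strong-convexity estimate gives $\phi(x^k)\ge \phi(x^{k+1})+\frac12\|x^k-x^{k+1}\|_D^2=\eta^k+\frac12\|x^{k+1}-x^k\|_D^2$; since $\phi(x^k)=\tilde F^k(x^k)\le F(x^k)$, the required bound on $\eta^k$ follows and the chain closes.

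The point requiring most care is the bookkeeping of which minorants persist from iteration $k$ to $k+1$: that $j_k\ne j_{k+1}$ and that every $i\ne j_{k+1}$ retains its data $(z^k_i,g^k_i)$. This is precisely what makes $\tilde F^{k+1}\ge \hat F^k$ hold and lets the two consecutive subproblems share the same affine model $\hat F^k$, which is the crux of the first inequality. The remaining ingredients — the closed-form minimization of an affine-plus-quadratic function and the strong-convexity inequality for a quadratically regularized convex function — are routine once $\hat F^k$ and the identities for $a$ and $\eta^k$ are in place.
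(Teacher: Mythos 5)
Your proof is correct and takes essentially the same route as the paper's: both inequalities rest on the all-linearized model whose gradient is $\sum_{i=1}^N g_i^k = -D(x^{k+1}-x^k)$, with the shifted-center minimization (minimizer $2x^{k+1}-x^k$) yielding the first inequality and the descent test yielding the second. The only cosmetic difference is that you obtain the key estimate $\tilde{F}^k(x^k)-\tilde{F}^k(x^{k+1})\ge \big\|x^{k+1}-x^k\big\|_D^2$ from the strong-convexity inequality for the subproblem, whereas the paper computes the same quantity directly from the subgradient identity \eqref{key-subgradient}.
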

\begin{proof}
By \eqref{subgradient-selection},
\begin{equation}
\label{key-subgradient}
\sum_{i=1}^N g^{k}_i + D(x^{k+1} - x^k) = 0.
\end{equation}
The optimal value of \eqref{subproblem} at iteration $k+1$ can be then estimated as follows:
\begin{align*}
\eta^{k+1}  &= \min_x\bigg\{ f_{j_{k+1}}(x) + \sum_{i\ne j_{k+1}} \tilde{f}_i^{k+1}(x) + \frac{1}{2}\big\|x-x^{k+1}\big\|^2_D\bigg\} \\
&\ge \min_x\bigg\{ \sum_{i=1}^N \tilde{f}_i^{k+1}(x) + \frac{1}{2}\big\|x-x^{k+1}\big\|^2_D\bigg\}\\
&= \sum_{i=1}^N \tilde{f}_i^{k+1}(x^{k+1})+ \min_x\bigg\{\Big\langle \sum_{i=1}^N g^{k}_i, x - x^{k+1}\Big\rangle
+ \frac{1}{2}\big\|x-x^{k+1}\big\|^2_D\bigg\}\\
&= \tilde{F}^{k}(x^{k+1})+ \min_x\bigg\{-\big\langle D(x^{k+1} - x^k) , x - x^{k+1}\big\rangle
+ \frac{1}{2}\big\|x-x^{k+1}\big\|^2_D\bigg\}.
\end{align*}
The minimizer on the right hand side is $x = 2x^{k+1}-x^k$, and we conclude that
\[
\eta^{k+1} \ge \tilde{F}^{k}(x^{k+1}) -  \frac{1}{2}\big\|x^{k+1}-x^k\big\|^2_D = \eta^k - \big\|x^{k+1}-x^k\big\|^2_D,
\]
which proves the left inequality in \eqref{eta-descent}. To prove the right inequality,
we observe that the test \eqref{eqn: n_new_rule1} for the
descent step is satisfied at iteration $k$, and thus
\[
F(x^k) - F(x^{k+1}) \ge  \beta\big(F(x^k) - \tilde{F}^k(x^{k+1})\big)
= \beta\big(\tilde{F}^k(x^{k}) - \tilde{F}^k(x^{k+1})\big).
\]
The expression on the right hand side can be calculated with the use of \eqref{key-subgradient}, exactly as in the
derivations above, which yields
\[
F(x^k) - F(x^{k+1}) \ge \beta \big\|x^{k+1}-x^k\big\|^2_D.
\]
This proves the right inequality in \eqref{eta-descent}.
\end{proof}

We can now summarize convergence properties of the sequences generated by the algorithm.
\begin{corollary}
\label{c:summary}
Suppose ${\rm Argmin}\,F\ne \emptyset$ and $\varepsilon=0$. Then a point $x^*\in {\rm Argmin}\,F$ exists, such that:
\begin{tightlist}{ii}
\item $\displaystyle{\lim_{k\to\infty} x^k = \lim_{k\to\infty} z^k_{j_k} = x^*}$;
\item $\displaystyle{\lim_{k\to\infty} \eta^k = F(x^*)}$.
\end{tightlist}
\end{corollary}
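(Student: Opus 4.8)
The plan is to dispatch the two mutually exclusive cases already isolated in this section — finitely many descent steps and infinitely many descent steps — and in each case to exhibit the optimal point $x^*$ and verify all three limits. In both cases the convergence $x^k\to x^*$ with $x^*\in{\rm Argmin}\,F$ is essentially handed to us by Theorems \ref{t:finite-descent} and \ref{t:infinite-descent}; the real work is to upgrade this to convergence of $\{z^k_{j_k}\}$ and of $\{\eta^k\}$. The common engine for this is Lemma \ref{l:radius}, which bounds $\tfrac12\|z^k_{j_k}-\prox_F(x^k)\|_D^2$ by $F_D(x^k)-\eta^k$, so once I know both $\eta^k\to F(x^*)$ and $F_D(x^k)\to F(x^*)$, the convergence of $z^k_{j_k}$ to $x^*$ will follow from nonexpansiveness of $\prox_F$ in the $D$-norm together with $\prox_F(x^*)=x^*$.

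In the finite case, let $m$ be the last descent step and put $x^*=x^m$. Theorem \ref{t:finite-descent} gives $x^*\in{\rm Argmin}\,F$, and since every iteration $k\ge m$ is a null step we have $x^k=x^*$ for all such $k$, so $\lim_k x^k=x^*$. As $x^*$ minimizes $F$, we have $\prox_F(x^*)=x^*$ and $F_D(x^*)=F(x^*)$. I would then reuse the null-step machinery from the proof of Theorem \ref{t:finite-descent} (monotonicity of $\{\eta^k\}$ from Lemma \ref{lem: increment_bound}, boundedness of $\{z^k_{j_k}\}$ from Lemma \ref{l:radius}, and local boundedness of subgradients) to conclude $v_k\to 0$ and hence $\eta^k\uparrow F(x^*)$, giving (ii); plugging $F_D(x^*)-\eta^k\to 0$ into Lemma \ref{l:radius} then yields $z^k_{j_k}\to x^*$, giving (i).

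The infinite case is where the main obstacle lies. Theorem \ref{t:infinite-descent} supplies $x^k\to x^*\in{\rm Argmin}\,F$, and its proof gives $F(x^k)\downarrow F(x^*)$; combined with continuity of the Moreau envelope, the bound $\eta^k\le F_D(x^k)$ yields $\limsup_k\eta^k\le F(x^*)$. The difficulty is the matching lower bound, because $\{\eta^k\}$ need not be monotone: it can decrease at a descent step. I would tame this with a summable-perturbation argument. Lemma \ref{l:eta-in-descent} bounds each decrease by $\eta^{k+1}-\eta^k\ge-\|x^{k+1}-x^k\|_D^2$, and its second inequality gives $\|x^{k+1}-x^k\|_D^2\le\tfrac1\beta\big(F(x^k)-F(x^{k+1})\big)$, which telescopes over descent steps (null steps leave $F$ unchanged) to at most $\tfrac1\beta\big(F(x^1)-F(x^*)\big)<\infty$. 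Writing $\epsilon_k$ for this drop ($\epsilon_k=0$ at null steps, where $\eta^{k+1}\ge\eta^k$ by Lemma \ref{lem: increment_bound}), the sequence $\eta^k+\sum_{j<k}\epsilon_j$ is nondecreasing and bounded above, so $\{\eta^k\}$ converges.

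To pin down the limit I would pass to the descent subsequence $k\in\mcK$, where the descent test \eqref{eqn: n_new_rule1} gives $\beta v_k\le F(x^k)-F(x^{k+1})$ and hence $v_k\to 0$; since $\eta^k\ge\tilde{F}^k(z^k_{j_k})=F(x^k)-v_k$, the common limit of $\{\eta^k\}$ is at least $\lim_{k\in\mcK}\big(F(x^k)-v_k\big)=F(x^*)$, so $\eta^k\to F(x^*)$, which is (ii). Finally $F_D(x^k)-\eta^k\to 0$, and Lemma \ref{l:radius} together with $\prox_F(x^k)\to\prox_F(x^*)=x^*$ forces $z^k_{j_k}\to x^*$, which is (i). The only facts used beyond the results already proved are the continuity of $F_D$ and the nonexpansiveness of $\prox_F$, both standard properties of the Moreau--Yosida regularization, which I would simply invoke.
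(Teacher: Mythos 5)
Your proposal is correct and follows essentially the same route as the paper's proof: convergence of $\{x^k\}$ is taken from Theorems \ref{t:finite-descent} and \ref{t:infinite-descent}, the near-monotonicity of $\{\eta^k\}$ is controlled via Lemmas \ref{lem: increment_bound} and \ref{l:eta-in-descent}, the limit $\eta^k\to F(x^*)$ is identified through the descent test \eqref{eqn: n_new_rule1} along descent steps, and $z^k_{j_k}\to x^*$ is extracted from Lemma \ref{l:radius}. The only difference is cosmetic bookkeeping for the non-monotone sequence $\{\eta^k\}$: you compensate the descent-step drops by the summable quantities $\epsilon_k=\big\|x^{k+1}-x^k\big\|_D^2\le\tfrac{1}{\beta}\big(F(x^k)-F(x^{k+1})\big)$, whereas the paper observes that $\eta^k-\tfrac{1}{\beta}F(x^k)$ is nondecreasing and bounded above --- both devices are immediate consequences of the same inequality \eqref{eta-descent}.
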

\begin{proof} The convergence of $\{x^k\}$ to a minimum point $x^*$ has been proved in
Theorems \ref{t:finite-descent} and \ref{t:infinite-descent}. It remains to verify the convergence properties of $\{z^k_{j_k}\}$
and $\{\eta^k\}$. It follows from Lemmas \ref{lem: increment_bound} and  \ref{l:eta-in-descent} that the sequence
$\eta^k-\frac{1}{\beta}F(x^k)$ is nondecreasing.
Since $\eta^k \le F(x^k)$ by construction, this sequence is bounded from above, and thus convergent. Therefore, a limit $\eta^*$
of $\{\eta^k\}$ exists and $\eta^* \le F(x^*)$.
If the number of descent steps is finite, the equality $\eta^* = F(x^*)$ follows from
Theorem \ref{t:finite-descent}. If the number of descent steps is infinite, inequality
\eqref{eqn: n_new_rule1} at each descent step $k$ yields:
\[
F(x^k) - \eta^k \le F(x^k) - \tilde{F}^k(x^{k+1}) \le \frac{1}{\beta} \big( F(x^k) - F(x^{k+1}) \big).
\]
Passing to the limit over descent steps $k\to \infty$ we conclude that $\eta^* \ge F(x^*)$. Consequently, $\eta^*=F(x^*)$ and assertion (ii)
is true.

The convergence of the sequence $\{z^k_{j_k}\}$ to $x^*$ follows from inequality \eqref{radius2},
because $x^k \to x^*$ and $\eta^k \to F(x^*)$.
\end{proof}

\section{Rate of Convergence}
\label{s:rate}

Our objective in this section is to estimate the rate of convergence of the method. To this end, we assume that $\varepsilon>0$
at Step 2 (inequality \eqref{eqn: n_v}) and we estimate the number of iterations needed to achieve this accuracy.
We also make an additional assumption about the growth rate of the function $F(\cdot)$.
\begin{assumption}
\label{a:growth}
The function $F(\cdot)$ has a unique minimum point $x^*$ and a constant $\alpha >0$ exists, such that
\[
F(x) - F(x^{*}) \geq \alpha \big\|x - x^{*}\big\|^2_{D},
\]
for all $x\in \Rb^n$.
\end{assumption}

Assumption \ref{a:growth} has a number of implications on the properties of the method. First, we recall from
\cite[Lem. 7.12]{ruszczynski2006nonlinear} the following estimate of the Moreau--Yosida regularization.
\begin{lemma}
\label{l:lemma7.12} For any point $x\in \Rb^n$, we have
\begin{equation}
\label{lemma7.12}
F_D(x) \leq  F(x) - \big\|x - x^{*}\big\|^2_D\,\varphi\bigg( \frac{F(x) - F(x^{*})}{\big\|x - x^{*}\big\|^2_D}\bigg),
\end{equation}
where
\[
\varphi(t) =
\begin{cases} t^2 & \text{if\; $t\in[0,1]$},\\
-1 +2t & \text{if\; $t \ge 1$}.
\end{cases}
\]
\end{lemma}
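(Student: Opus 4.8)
The plan is to bound the Moreau--Yosida value $F_D(x)$ from above by testing the minimand in \eqref{MY-regularization} at a single, well-chosen point and then optimizing that choice. Since $F_D(x)$ is an infimum, every feasible $z$ gives $F_D(x)\le F(z)+\tfrac12\|z-x\|_D^2$, and I would restrict attention to the segment joining $x$ to the minimizer $x^*$, namely $z_\tau=(1-\tau)x+\tau x^*$ with $\tau\in[0,1]$. This reduces the $n$-dimensional envelope to a one-dimensional problem in $\tau$, whose constrained optimum is exactly the structure reflected by the two-piece definition of $\varphi$.

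The key steps are then elementary. Convexity of $F$ gives $F(z_\tau)\le(1-\tau)F(x)+\tau F(x^*)=F(x)-\tau\big(F(x)-F(x^*)\big)$, while the proximal term is $\tfrac12\|z_\tau-x\|_D^2=\tfrac12\tau^2\|x-x^*\|_D^2$. Writing $R^2=\|x-x^*\|_D^2$ and $\Delta=F(x)-F(x^*)$, this yields the scalar upper bound $F_D(x)\le F(x)-\tau\Delta+\tfrac12\tau^2R^2$ for every $\tau\in[0,1]$. Minimizing the right-hand side over $\tau\in[0,1]$ is a one-variable convex quadratic program whose solution depends on whether the unconstrained minimizer $\tau_\star=\Delta/R^2=t$ lies inside $[0,1]$.

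Next I would split into the two cases that generate the two branches of $\varphi$. When $t\le1$ the minimizer is interior, $\tau=t$, and substitution produces a reduction proportional to $t^2R^2$; when $t\ge1$ the constraint is active, the optimal $\tau=1$, and the reduction becomes affine in $t$, proportional to $(2t-1)R^2$. The matching values and slopes at $t=1$ explain why $\varphi$ is $C^1$ at the junction, so the two estimates glue into the single bound \eqref{lemma7.12}. The degenerate case $x=x^*$ (where $R=0$ and $t$ is undefined) is handled directly, since then $F_D(x^*)\le F(x^*)=F(x)$ and the right-hand side of \eqref{lemma7.12} collapses to $F(x)$.

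The main obstacle is pinning down the constant in front of $\varphi$: the segment-plus-convexity bound is tight only for functions that are affine along $[x^*,x]$, so one must verify that the normalization of the proximal term in \eqref{MY-regularization} is consistent with the coefficients appearing in the definition of $\varphi$, and that the extremal convex $F$ with the prescribed data $F(x)$, $F(x^*)$ and distance $R$ is indeed attained on this segment. Once the constrained quadratic minimization is carried out with the correct normalization, assembling the two branches into \eqref{lemma7.12} is routine.
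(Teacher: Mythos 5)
Your plan is the natural (and, as far as one can tell, the intended) route: the paper itself offers no argument beyond the citation of \cite[Lem.~7.12]{ruszczynski2006nonlinear}, and the segment substitution $z_\tau=(1-\tau)x+\tau x^*$ with convexity and a one-dimensional quadratic minimization is the standard proof of estimates of this kind. Your execution is also correct as far as it goes. Writing $\Delta=F(x)-F(x^*)$, $R^2=\|x-x^*\|_D^2$ and $t=\Delta/R^2$, your bound reads
\[
F_D(x)\;\le\; F(x)-\max_{\tau\in[0,1]}\Bigl(\tau\Delta-\tfrac12\tau^2R^2\Bigr)
\;=\; F(x)-\tfrac12\,\|x-x^*\|_D^2\,\varphi(t),
\]
since the constrained maximizer is $\tau=\min(t,1)$, giving a reduction of $\tfrac12 t^2R^2$ when $t\le1$ and of $\tfrac12(2t-1)R^2$ when $t\ge1$. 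Note the factor $\tfrac12$: what your argument establishes is \eqref{lemma7.12} with $\|x-x^*\|_D^2$ replaced by $\tfrac12\|x-x^*\|_D^2$.

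The ``main obstacle'' you flag---pinning down the constant---is therefore real, but it is not a defect of your proof: the factor $\tfrac12$ cannot be removed, because your bound is tight and inequality \eqref{lemma7.12} as printed is false. The extremal functions you describe (affine along $[x^*,x]$) witness this: with $n=1$, $D=1$, $F(z)=|z|$, $x^*=0$, $x=2$, one computes $F_D(2)=\min_z\{|z|+\tfrac12(z-2)^2\}=\tfrac32$ (attained at $z=1$), while the right-hand side of \eqref{lemma7.12} equals $2-4\,\varphi(\tfrac12)=1$; adding $\theta z^2$ with small $\theta>0$ makes the example satisfy Assumption~\ref{a:growth} and it remains a counterexample. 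So the statement needs an extra factor $\tfrac12$ in front of the norm term (equivalently, $\varphi$ should be $t^2/2$ on $[0,1]$ and $t-\tfrac12$ for $t\ge1$), a constant evidently lost in transcription from the cited source. This is harmless for the rest of the paper: in Lemma~\ref{lem: conv_rate_1} the conclusion becomes $F(x^k)-F(x^*)\le 2\varepsilon/\min(\alpha,1)$, in Lemma~\ref{l:FD-improvement} the constant $\frac{2\varphi(\alpha)}{1+2\varphi(\alpha)}$ becomes $\frac{\varphi(\alpha)}{1+\varphi(\alpha)}$, and the $\mathcal{O}\bigl(\ln(1/\varepsilon)/\varepsilon\bigr)$ rate is unaffected. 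In short: complete your write-up as a proof of the corrected statement; no argument can yield the statement as printed.
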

\begin{proof} See \cite[Lem. 7.12]{ruszczynski2006nonlinear}. \end{proof}

\begin{lemma}\label{lem: conv_rate_1}
Suppose Assumption \ref{a:growth} is satisfied. Then the stopping test \eqref{eqn: n_v} implies that
\begin{equation}\label{eqn:stopping lemma}
F(x^k) - F(x^{*}) \leq \frac{\varepsilon}{\min(\alpha,1)}.
\end{equation}
\end{lemma}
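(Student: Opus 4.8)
The plan is to run a short chain of inequalities that converts the stopping test, which controls the linearization gap $v_k$, into control of the true optimality gap $F(x^k)-F(x^*)$, via the Moreau--Yosida estimate of Lemma~\ref{l:lemma7.12} and the growth condition of Assumption~\ref{a:growth}.

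First I would connect the stopping quantity to the regularized value. Since $z^k_{j_k}$ solves \eqref{subproblem}, its optimal value satisfies $\eta^k=\tilde{F}^k(z^k_{j_k})+\frac{1}{2}\big\|z^k_{j_k}-x^k\big\|^2_D\ge \tilde{F}^k(z^k_{j_k})$, so that
\[
F(x^k)-\eta^k \le F(x^k)-\tilde{F}^k(z^k_{j_k})=v_k\le \varepsilon,
\]
where the last inequality is precisely the stopping test \eqref{eqn: n_v} together with the definition \eqref{vk-def} of $v_k$. Moreover, because $\tilde{F}^k(\cdot)\le F(\cdot)$, the subproblem minorizes the Moreau--Yosida problem \eqref{MY-regularization}, giving $\eta^k\le F_D(x^k)$. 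Combining the two facts yields the key bound $F(x^k)-F_D(x^k)\le \varepsilon$.

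Next I would insert Lemma~\ref{l:lemma7.12}. Assuming $x^k\ne x^*$ (the case $x^k=x^*$ is trivial, as then $F(x^k)-F(x^*)=0$), set $R=\big\|x^k-x^*\big\|^2_D>0$ and $t=(F(x^k)-F(x^*))/R$. Lemma~\ref{l:lemma7.12} gives $F(x^k)-F_D(x^k)\ge R\,\varphi(t)$, hence $R\,\varphi(t)\le\varepsilon$, while Assumption~\ref{a:growth} gives $t\ge\alpha$.

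The final step is to unwind the piecewise function $\varphi$. The clean route is to observe that $\varphi(t)\ge t\min(t,1)$ for all $t\ge 0$: for $t\in[0,1]$ this reads $t^2\ge t\cdot t$, and for $t\ge 1$ it reads $2t-1\ge t$. Since $t\ge\alpha$ forces $\min(t,1)\ge\min(\alpha,1)$, I obtain
\[
\varepsilon\ge R\,\varphi(t)\ge R\,t\,\min(t,1)\ge \big(F(x^k)-F(x^*)\big)\,\min(\alpha,1),
\]
which rearranges to the claimed estimate \eqref{eqn:stopping lemma}. The only real obstacle is bookkeeping around $\varphi$: keeping the regimes $\alpha\le 1$ and $\alpha>1$ consistent (in the former $\min(\alpha,1)=\alpha$ governs the small-gap branch $t\le 1$; in the latter $t\ge\alpha>1$ lands in the linear branch where $\varphi(t)\ge t$), and the single uniform inequality $\varphi(t)\ge t\min(\alpha,1)$ dispatches both at once.
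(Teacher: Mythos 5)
Your proof is correct and follows essentially the same route as the paper: both derive the key bound $F(x^k)-F_D(x^k)\le\varepsilon$ from the stopping test via $\tilde{F}^k\le F$, and then combine Lemma~\ref{l:lemma7.12} with Assumption~\ref{a:growth}. The only difference is cosmetic: where the paper splits into the two cases $t\le 1$ and $t>1$ of $\varphi$, you dispatch both at once with the uniform inequality $\varphi(t)\ge t\min(t,1)\ge t\min(\alpha,1)$, which is a slightly cleaner bookkeeping of the same argument.
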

\begin{proof}
As $\tilde{F}^k(\cdot) \leq F(\cdot)$, the stopping criterion implies that
\begin{equation}\label{eqn:stoping inequality}
\begin{aligned}
F_D(x^k) = \min_x\Big\{F(x) + \frac{1}{2}\big\|x - x^k\big\|^2_D \Big\} &\geq
\min_x \Big\{\tilde{F}^k(x) + \frac{1}{2}\big\|x - x^k\big\|^2_D \Big\}\\
 &= \tilde{F}^k (z^k_{j_k}) + \frac{1}{2}\big\|z^k_{j_k} - x^k\big\|^2_D \geq F(x^k) - \varepsilon.
\end{aligned}
\end{equation}
Consider two cases.\\
\emph{Case 1:} If $F(x^k) - F(x^{*}) \le \big\|x^k - x^{*}\big\|^2_D$, then \eqref{lemma7.12} with $x=x^k$ yields
\[
F_D(x^k) \leq  F(x^k) - \frac{\big(F(x^k) - F(x^{*})\big)^2}{\big\|x^k - x^{*}\big\|^2_D}.
\]
Combining this inequality with \eqref{eqn:stoping inequality}, we conclude that
\begin{equation}
 \frac{\big(F(x^k) - F(x^{*})\big)^2}{\big\|x^k - x^{*}\big\|^2_D} \leq \varepsilon.
\end{equation}
Substitution of the denominator by the upper estimate $(F(x^k) - F(x^{*}))/\alpha$ implies \eqref{eqn:stopping lemma}.\\
\emph{Case 2:}
$F(x^k) - F(x^{*})> \big\|x^k - x^{*}\big\|^2_D$.
Then \eqref{lemma7.12} yields
\[
F_D(x^k) \leq  F(x^k) - 2 \big( F(x^k) - F(x^{*}) \big) + \big\|x^k - x^{*}\big\|^2_D.
\]
With a view to \eqref{eqn:stoping inequality}, we obtain
\[
2 \big( F(x^k) - F(x^{*}) \big) - \big\|x^k - x^{*}\big\|^2_D \le \varepsilon,
\]
which implies that
$F(x^k) - F(x^{*}) \le \varepsilon$
in this case.
\end{proof}
\begin{lemma}
\label{l:FD-improvement}
Suppose Assumption \ref{a:growth} is satisfied. Then at any iteration $k$ we have
\[
F(x^k) - \eta^k \ge \frac{2\varphi(\alpha)}{1 + 2\varphi(\alpha)}\big( F(x^k) - F(x^*)\big).
\]
\end{lemma}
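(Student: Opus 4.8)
The plan is to discard the exactly-treated component of the subproblem, compare $\eta^k$ with the full Moreau--Yosida value $F_D(x^k)$, and then analyze $F(x^k)-F_D(x^k)$ through Lemma~\ref{l:lemma7.12}. The starting observation is that, since every $\tilde{f}^k_i$ is a minorant of $f_i$, we have $\tilde{F}^k(\cdot)\le F(\cdot)$ pointwise, so that
\[
\eta^k=\min_x\Big\{\tilde{F}^k(x)+\tfrac12\|x-x^k\|^2_D\Big\}\le \min_x\Big\{F(x)+\tfrac12\|x-x^k\|^2_D\Big\}=F_D(x^k).
\]
Hence $F(x^k)-\eta^k\ge F(x^k)-F_D(x^k)$, and it suffices to prove the stated lower bound for $F(x^k)-F_D(x^k)$. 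If $x^k=x^*$ both sides vanish, so I may assume $x^k\ne x^*$.

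Next I would derive two complementary estimates, both carrying the single quantity $\|x^k-x^*\|^2_D$. On one hand, Lemma~\ref{l:lemma7.12} applied at $x=x^k$ gives $F(x^k)-F_D(x^k)\ge \|x^k-x^*\|^2_D\,\varphi(t)$, where $t=\big(F(x^k)-F(x^*)\big)/\|x^k-x^*\|^2_D$; since $\varphi$ is nondecreasing and Assumption~\ref{a:growth} forces $t\ge\alpha$, this yields $F(x^k)-F_D(x^k)\ge \varphi(\alpha)\,\|x^k-x^*\|^2_D$. On the other hand, inserting the trial point $x^*$ into the minimization defining $F_D(x^k)$ gives $F_D(x^k)\le F(x^*)+\tfrac12\|x^k-x^*\|^2_D$, that is, $F_D(x^k)-F(x^*)\le\tfrac12\|x^k-x^*\|^2_D$.

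The two estimates are then combined to eliminate the distance term. Writing $A=F(x^k)-F_D(x^k)$ and $C=F_D(x^k)-F(x^*)$ (both nonnegative), the second estimate reads $\|x^k-x^*\|^2_D\ge 2C$, and substituting into the first gives the self-referential relation $A\ge 2\varphi(\alpha)\,C$. Since $A+C=F(x^k)-F(x^*)$, we may write $C=\big(F(x^k)-F(x^*)\big)-A$; solving $A\ge 2\varphi(\alpha)\big((F(x^k)-F(x^*))-A\big)$ for $A$ produces exactly
\[
F(x^k)-F_D(x^k)=A\ge \frac{2\varphi(\alpha)}{1+2\varphi(\alpha)}\big(F(x^k)-F(x^*)\big),
\]
which, chained with $F(x^k)-\eta^k\ge F(x^k)-F_D(x^k)$, is the claim.

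The monotonicity of $\varphi$ and the trial-point inequality are routine. The step that needs the right idea is recognizing that a direct substitution of Assumption~\ref{a:growth} into Lemma~\ref{l:lemma7.12} does \emph{not} by itself recover the stated constant: the growth condition only bounds $\|x^k-x^*\|^2_D$ from \emph{above} by $\big(F(x^k)-F(x^*)\big)/\alpha$, which is the wrong direction for lower-bounding $F(x^k)-F_D(x^k)$. The key is therefore to pair the Moreau lower bound with the independent trial-point upper bound on $F_D(x^k)-F(x^*)$, both proportional to $\|x^k-x^*\|^2_D$, so that this factor cancels and the characteristic ratio $\tfrac{2\varphi(\alpha)}{1+2\varphi(\alpha)}$ emerges. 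I expect this cancellation to be the main obstacle; everything else is bookkeeping.
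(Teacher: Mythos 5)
Your proof is correct and follows essentially the same route as the paper's: both reduce the claim to $F(x^k)-\eta^k \ge F(x^k)-F_D(x^k)$, apply Lemma~\ref{l:lemma7.12} together with Assumption~\ref{a:growth} to get $F(x^k)-F_D(x^k)\ge \varphi(\alpha)\,\big\|x^k-x^*\big\|^2_D$, and then use $x^*$ as a trial point in the Moreau--Yosida minimization to obtain $\big\|x^k-x^*\big\|^2_D\ge 2\big(F_D(x^k)-F(x^*)\big)$, so the distance term cancels and the ratio $\tfrac{2\varphi(\alpha)}{1+2\varphi(\alpha)}$ emerges. The only cosmetic difference is that you derive $\eta^k\le F_D(x^k)$ directly from $\tilde{F}^k(\cdot)\le F(\cdot)$, whereas the paper cites Lemma~\ref{l:radius} for the same inequality.
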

\begin{proof}
By Lemma \ref{l:radius},
\[
F(x^k) - \eta^k \ge F(x^k)-F_D(x^k).
\]
To derive a lower bound for the right hand side of the last inequality, we
use Assumption \ref{a:growth} in \eqref{lemma7.12} with $x=x^k$. We obtain
\begin{equation}
\label{lemma7.12a}
F_D(x^k) \leq  F(x^k) - \big\|x^k - x^{*}\big\|^2_D\,\varphi(\alpha).
\end{equation}
By the definition of the Moreau--Yosida regularization, for any optimal solution $x^*$ we have
\[
F(x^*) + \frac{1}{2} \big\| x^*-x^k\big\|^2_D \ge F_D(x^k),
\]
and thus
\[
\big\|x^k - x^{*}\big\|^2_D \ge 2\big(F_D(x^k)-F(x^*)\big).
\]
Substitution to \eqref{lemma7.12a} yields
\[
F(x^k) - F_D(x^k) \ge 2\big(F_D(x^k)-F(x^*)\big)\varphi(\alpha), 
\]
which can be manipulated to
\[
 F(x^k) - F_D(x^k) \ge \frac{2\varphi(\alpha)}{1 + 2\varphi(\alpha)}( F(x^k) - F(x^*)\big).
\]
This can be combined with the first inequality in the proof, to obtain the desired result.
\end{proof}

In order to estimate the number of iterations of the method needed to achieve the prescribed
accuracy, we need to consider two aspects. First, we prove linear rate of convergence between descent steps. Then, we estimate
 the numbers of null steps between consecutive descent steps.

By employing the estimate of Lemma \ref{lem: conv_rate_1}, we can address the first aspect. To simplify notation, with no loss of generality, we assume that $\alpha \in (0,1]$
(otherwise, we would have to replace $\alpha$ with $\bar{\alpha} = \min(\alpha,1)$ in the considerations below).

\begin{lemma}\label{lem: conv_rate_2}
Suppose $x^{*}$ is the unique minimum point of $F(\cdot)$ and  Assumption \ref{a:growth} is satisfied. Then at every descent step $k$, when the update step rule \eqref{eqn: n_new_rule1} is satisfied, we have the inequality:
\begin{equation}
\label{e:linear-rate}
F(z^k_{j_k}) - F(x^*) \leq (1 - \alpha\beta)\big(F(x^k) - F(x^{*})\big).
\end{equation}
\end{lemma}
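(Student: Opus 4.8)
The plan is to read the target inequality \eqref{e:linear-rate} as a one-step contraction driven by the descent test. At a descent step we have $x^{k+1}=z^k_{j_k}$, so the update rule \eqref{eqn: n_new_rule1} states exactly that $F(z^k_{j_k})\le F(x^k)-\beta v_k$, where $v_k=F(x^k)-\tilde{F}^k(z^k_{j_k})$ as in \eqref{vk-def}. Subtracting $F(x^*)$ from both sides, the claim is equivalent to the single estimate
\[
v_k\ \ge\ \alpha\big(F(x^k)-F(x^*)\big).
\]
So the entire proof reduces to bounding the model gap $v_k$ from below by a fixed fraction $\alpha$ of the current optimality gap.

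First I would pass from the gap $v_k$ to the Moreau--Yosida regularization. Since every $\tilde{f}^k_i$ minorizes $f_i$, the model $\tilde{F}^k$ minorizes $F$, hence $\eta^k\le F_D(x^k)$, which is precisely the content of Lemma \ref{l:radius}. Because $\tilde{F}^k(z^k_{j_k})\le \eta^k$ (the two differ by the nonnegative proximal term $\tfrac12\|z^k_{j_k}-x^k\|_D^2$), this yields
\[
v_k\ =\ F(x^k)-\tilde{F}^k(z^k_{j_k})\ \ge\ F(x^k)-F_D(x^k).
\]
It therefore suffices to prove $F(x^k)-F_D(x^k)\ge \alpha\big(F(x^k)-F(x^*)\big)$, a statement purely about $F$, its regularization, and the growth constant.

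The main work, and the only real obstacle, is this last inequality, and it is exactly the estimate already carried out in Lemma \ref{lem: conv_rate_1}. I would either invoke that lemma with the choice $\varepsilon=v_k$ (its proof uses only $F_D(x^k)\ge F(x^k)-\varepsilon$, which holds here since $F_D(x^k)\ge \eta^k\ge \tilde{F}^k(z^k_{j_k})=F(x^k)-v_k$) to conclude $F(x^k)-F(x^*)\le v_k/\alpha$, or reproduce its two-case argument directly. Writing $t=\big(F(x^k)-F(x^*)\big)/\|x^k-x^*\|_D^2$ (the case $x^k=x^*$ being trivial), Assumption \ref{a:growth} gives $t\ge\alpha$, and Lemma \ref{l:lemma7.12} gives $F(x^k)-F_D(x^k)\ge \|x^k-x^*\|_D^2\,\varphi(t)$. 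When $t\le 1$ one has $\varphi(t)=t^2$, so the right-hand side equals $\big(F(x^k)-F(x^*)\big)^2/\|x^k-x^*\|_D^2\ge \alpha\big(F(x^k)-F(x^*)\big)$ after bounding the denominator by $\big(F(x^k)-F(x^*)\big)/\alpha$; when $t>1$ one has $\varphi(t)=2t-1$, and since then $\|x^k-x^*\|_D^2<F(x^k)-F(x^*)$ the right-hand side exceeds $F(x^k)-F(x^*)\ge \alpha\big(F(x^k)-F(x^*)\big)$, using $\alpha\le 1$. Both cases deliver the required fraction $\alpha$.

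Finally I would assemble the pieces: combining $v_k\ge\alpha\big(F(x^k)-F(x^*)\big)$ with the descent inequality gives $F(z^k_{j_k})-F(x^*)\le (1-\alpha\beta)\big(F(x^k)-F(x^*)\big)$. I would emphasize that the sharp piecewise use of $\varphi$ is \emph{essential}: the cruder bound $F(x^k)-F_D(x^k)\ge \tfrac{2\varphi(\alpha)}{1+2\varphi(\alpha)}\big(F(x^k)-F(x^*)\big)$ from Lemma \ref{l:FD-improvement} would only produce the weaker contraction factor $1-\tfrac{2\alpha^2\beta}{1+2\alpha^2}$, which for small $\alpha$ is far from $1-\alpha\beta$; the case analysis is what recovers the clean constant.
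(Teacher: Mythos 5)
Your proposal is correct and follows essentially the same route as the paper: the paper likewise rewrites the descent test as $F(z^k_{j_k})\le F(x^k)-\beta v_k$ and then invokes Lemma \ref{lem: conv_rate_1} with $\varepsilon = F(x^k)-\tilde{F}^k(z^k_{j_k})=v_k$ to get $F(x^k)-F(x^*)\le v_k/\alpha$, which is exactly your key inequality $v_k\ge\alpha\big(F(x^k)-F(x^*)\big)$. Your two-case reproduction of that lemma's argument (and the observation that the cruder bound of Lemma \ref{l:FD-improvement} would not yield the constant $1-\alpha\beta$) is a faithful unpacking of the same proof, not a different one.
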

\begin{proof}
It follows from the update rule (\ref{eqn: n_new_rule1}) that
\[
F(z^k_{j_k}) \leq F(x^k) - \beta\big(F(x^k) - \tilde{F}^k(z^k_{j_k})\big).
\]
Using Lemma \ref{lem: conv_rate_1} with $\varepsilon=F(x^k) - \tilde{F}^k(z^k_{j_k})$, we obtain
\[
F(x^k) - F(x^{*}) \leq \frac{1}{\alpha} \big(F(x^k) - \tilde{F}^k(z^k_{j_k})\big).
\]
Combining these inequalities and simplifying, we conclude that
\begin{align*}
F(z^k_{j_k}) &\leq (1 - \beta) F(x^k) + \beta \big( \alpha F(x^{*}) - \alpha F(x^k) + F(x^k)\big) \\
&=F(x^k) - \alpha\beta\big( F(x^k) - F(x^{*})\big).
\end{align*}
Subtracting $F(x^*)$ from both sides, we obtain the linear rate \eqref{e:linear-rate}.
\end{proof}

We now pass to the second issue: the estimation of the number of null steps between two consecutive
descent steps. We shall base it on the analysis of the gap $F(x^k)-\eta^k$.

By virtue of Corollary \ref{c:summary}, all points $\{z^k_{j_k}\}$ generated by the algorithm are uniformly bounded.
 Since subgradients of finite-valued convex functions are locally bounded, the subgradients of all $f_{j_k}$ are bounded,
and thus a constant $M$ exists, such that
\[
\big\| s^k_{j_{k+1}}-g^k_{j_{k+1}}\big\|^2_{D^{-1}} \le M
\]
at all null steps. With no loss of generality, we assume that $\varepsilon \le (N-1)M$.

\begin{lemma}\label{lem: conv_rate_3}
If a null step is made at iteration $k$, then
\begin{equation}
F(x^k) - \eta^{k+1} \le \gamma \big(F(x^k) - \eta^k\big),
\end{equation}
where
\begin{equation}
\label{tauk-bound}
\gamma = 1 - \frac{1}{2}\bigg(\frac{1-\beta}{N-1}\bigg)^2\frac{\varepsilon}{M}.
\end{equation}
\end{lemma}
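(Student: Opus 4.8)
The plan is to reduce the claimed contraction to a lower bound on the one-step increase $\eta^{k+1}-\eta^k$, and then invoke Lemma \ref{lem: increment_bound}, which has already done the analytic work. First observe that the desired inequality $F(x^k)-\eta^{k+1}\le \gamma\,(F(x^k)-\eta^k)$ is algebraically equivalent to
\[
\eta^{k+1}-\eta^k \ge (1-\gamma)\big(F(x^k)-\eta^k\big),
\]
so it suffices to establish this form, with $1-\gamma=\tfrac12\big(\tfrac{1-\beta}{N-1}\big)^2\tfrac{\varepsilon}{M}$ read off from \eqref{tauk-bound}. The quantity that connects the two sides is $v_k=F(x^k)-\tilde{F}^k(z^k_{j_k})$ from \eqref{vk-def}. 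Since $\eta^k=\tilde{F}^k(z^k_{j_k})+\tfrac12\|z^k_{j_k}-x^k\|_D^2\ge \tilde{F}^k(z^k_{j_k})$, I immediately obtain the key comparison $v_k\ge F(x^k)-\eta^k\ge 0$.

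Next I would apply the increment bound \eqref{eta-increase}, $\eta^{k+1}-\eta^k\ge \tfrac{1-\beta}{2(N-1)}\,\bar\mu_k v_k$, and split according to which term realizes the minimum in the definition \eqref{muk-def} of $\bar\mu_k$. In the case $\bar\mu_k=1$ this reads $\eta^{k+1}-\eta^k\ge \tfrac{1-\beta}{2(N-1)}\,v_k\ge \tfrac{1-\beta}{2(N-1)}(F(x^k)-\eta^k)$, and I would close by checking the purely numerical inequality $\tfrac{1-\beta}{2(N-1)}\ge 1-\gamma$; this follows from the standing assumption $\varepsilon\le (N-1)M$ together with $1-\beta<1$, which give $(1-\beta)\varepsilon/((N-1)M)\le 1$. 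In the complementary case $\bar\mu_k=\tfrac{(1-\beta)v_k}{(N-1)\|s^k_{j_{k+1}}-g^k_{j_{k+1}}\|_{D^{-1}}^2}$, substitution yields $\eta^{k+1}-\eta^k\ge \tfrac{(1-\beta)^2 v_k^2}{2(N-1)^2\,\|s^k_{j_{k+1}}-g^k_{j_{k+1}}\|_{D^{-1}}^2}$.

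To finish the second case I would bring in the two remaining ingredients. The uniform bound $\|s^k_{j_{k+1}}-g^k_{j_{k+1}}\|_{D^{-1}}^2\le M$, valid at null steps, replaces the denominator, and I would lower-bound $v_k^2$ by pairing the two estimates $v_k\ge F(x^k)-\eta^k$ and $v_k>\varepsilon$. The latter holds precisely because the algorithm did not stop at iteration $k$, i.e. the stopping test \eqref{eqn: n_v} failed; hence $v_k>\varepsilon$. Writing $v_k^2=v_k\cdot v_k\ge \varepsilon\,(F(x^k)-\eta^k)$ then gives $\eta^{k+1}-\eta^k\ge \tfrac{(1-\beta)^2\varepsilon}{2(N-1)^2 M}(F(x^k)-\eta^k)=(1-\gamma)(F(x^k)-\eta^k)$, exactly as required. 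The only real subtlety — and the step I would treat most carefully — is the bookkeeping of these two complementary cases and the verification that the constant produced in each dominates $1-\gamma$; there is no analytic obstacle beyond elementary inequalities, since Lemma \ref{lem: increment_bound} supplies the essential lower bound on $\eta^{k+1}-\eta^k$.
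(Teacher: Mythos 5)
Your proposal is correct and follows essentially the same route as the paper's proof: both invoke Lemma \ref{lem: increment_bound}, use the comparison $v_k \ge F(x^k)-\eta^k$, and split on the two cases in the definition \eqref{muk-def} of $\bar{\mu}_k$, handling the first case via $\varepsilon \le (N-1)M$ and the second via $v_k > \varepsilon$ (failed stopping test) together with the bound $M$. Writing the claim in the equivalent increment form $\eta^{k+1}-\eta^k \ge (1-\gamma)(F(x^k)-\eta^k)$ rather than as a gap contraction is only a cosmetic difference.
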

\begin{proof}
By Lemma \ref{lem: increment_bound}, we have
\begin{equation}
F(x^k) - \eta^{k+1} \le F(x^k) - \eta^k - \frac{1-\beta}{2(N-1)}\bar{\mu}_kv_k.
\end{equation}
On the other hand,
\begin{equation}
v_k = F(x^k) - \tilde{F}^k(z^k_{j_k}) = F(x^k) - \eta^k + \frac{1}{2}\big\|z^k_{j_k} - x^k\big\|_D^2 \geq F(x^k) - \eta^k.
\end{equation}
Combining the last two inequalities, we conclude that
\begin{equation}
\label{gap-decrease}
\begin{split}
F(x^k) - \eta^{k+1} 
    &\le F(x^k) - \eta^k - \frac{1-\beta}{2(N-1)}\bar{\mu}_k\big(F(x^k) - \eta^k\big)\\
    &= \biggl(1-\frac{1-\beta}{2(N-1)}\bar{\mu}_k\biggr)\big(F(x^k) - \eta^k\big).
\end{split}
\end{equation}
Consider the definition \eqref{muk-def} of $\bar{\mu}_k$ in Lemma \ref{lem: increment_bound}.
If $\bar{\mu}_k = 1$, then $1- \frac{1-\beta}{2(N-1)}\bar{\mu}_k$ is no greater than the bound \eqref{tauk-bound},
because $\varepsilon\le (N-1)M$.
Otherwise, $\bar{\mu}_k$ is given by the second case in \eqref{muk-def}. Since the algorithm does not stop, we have $v_k > \varepsilon$, and thus
\[
 \bar{\mu}_k \ge \frac{(1-\beta)\varepsilon}{(N-1)M}.
\]
Substitution to \eqref{gap-decrease} yields \eqref{tauk-bound}.
\end{proof}

Let $x^{(\ell-1)}, x^{(\ell)}, x^{(\ell+1)}$ be three consecutive proximal centers in the algorithm ($\ell \le 2$).
We want to bound the number of iterations with the proximal center $x^{(\ell)}$. To this end, we bound two quantities:

1. The optimal objective value of the \emph{first} subproblem with proximal center $x^{(\ell)}$,
whose iteration number we denote by $k(\ell)$:
    \begin{equation}
    \label{etakl}
    \eta^{k(\ell)} = \min f_{j_{k(\ell)}}(x) + \sum_{i\neq j_{k(\ell)}}\tilde{f}^{k(\ell)}_i(x) + \frac{1}{2}\big\|x-x^{(\ell)}\big\|_D^2.
    \end{equation}
    We need an upper bound for $F(x^{(\ell)}) - \eta^{k(\ell)}$.

2.  The optimal objective value of the \emph{last} subproblem with proximal center $x^{(\ell)}$, occurring at iteration $k'(\ell)=
k(\ell+1)-1$:
    \begin{equation}
    \eta^{k'(\ell)} = \min f_{j_{k'(\ell)}}(x) + \sum_{i\neq j_{k'(\ell)}}\tilde{f}^{k'(\ell)}_i(x) + \frac{1}{2}\big\|x-x^{(\ell)}\big\|_D^2.
    \end{equation}
    We need an upper bound for $F(x^{(\ell)}) - \eta^{k'(\ell)}$ which implies the update rule (\ref{eqn: n_new_rule1}).

In the following we discuss each issue separately.

Recall that according to the algorithm, $x^{(\ell)}$ is the optimal solution of the last subproblem with proximal center $x^{(\ell-1)}$. Let $f_{j_{k(\ell)-1}}$ be the non-linearized component function of the last subproblem with proximal center $x^{(\ell-1)}$, whose optimal solution is $x^{(\ell)}$. The optimal value of the subproblem \eqref{subproblem} is
\begin{equation}
\label{etakl-1}
\eta^{k(\ell)-1} = f_{j_{k(\ell)-1}}(x^{\ell}) + \sum_{i\neq j_{k(\ell)-1}}\tilde{f}^{k(\ell)-1}_{i}(x^{(\ell)}) + \frac{1}{2}\big\|x^{(\ell)} - x^{(\ell-1)}\big\|_D^2.
\end{equation}

\begin{lemma}\label{lem: null_step_start_bound} If a descent step is made at iteration $k(\ell)-1$, then
\begin{equation}\label{eqn: null_step_upper_bound_1}
F(x^{(\ell)}) - \eta^{k(\ell)} \leq \frac{3}{2\beta}\big(F(x^{(\ell-1)}) - F(x^{(\ell)})\big).
\end{equation}
\end{lemma}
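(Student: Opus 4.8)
I want to bound $F(x^{(\ell)}) - \eta^{k(\ell)}$ in terms of the descent gap $F(x^{(\ell-1)}) - F(x^{(\ell)})$. The natural strategy is to relate the \emph{first} subproblem at center $x^{(\ell)}$ (iteration $k(\ell)$) back to the \emph{last} subproblem at the previous center $x^{(\ell-1)}$ (iteration $k(\ell)-1$), since these two subproblems are linked by a single descent step that moves the proximal center from $x^{(\ell-1)}$ to $x^{(\ell)}$. The key available tool is Lemma~\ref{l:eta-in-descent}, which at a descent step gives both $\eta^{k(\ell)} - \eta^{k(\ell)-1} \ge -\|x^{(\ell)} - x^{(\ell-1)}\|_D^2$ and $-\|x^{(\ell)} - x^{(\ell-1)}\|_D^2 \ge \frac{1}{\beta}(F(x^{(\ell)}) - F(x^{(\ell-1)}))$. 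I would use these to convert the change in $\eta$ and the squared step length into the function-value decrease.

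\emph{Key steps in order.} First I would write $F(x^{(\ell)}) - \eta^{k(\ell)} = \big(F(x^{(\ell)}) - \eta^{k(\ell)-1}\big) + \big(\eta^{k(\ell)-1} - \eta^{k(\ell)}\big)$ and bound each piece. For the second piece, Lemma~\ref{l:eta-in-descent} gives $\eta^{k(\ell)-1} - \eta^{k(\ell)} \le \|x^{(\ell)} - x^{(\ell-1)}\|_D^2 \le \frac{1}{\beta}\big(F(x^{(\ell-1)}) - F(x^{(\ell)})\big)$. For the first piece, I would use expression~\eqref{etakl-1} for $\eta^{k(\ell)-1}$: since $x^{(\ell)}$ is the minimizer of that subproblem and $\tilde{F}^{k(\ell)-1} \le F$, one has $\eta^{k(\ell)-1} = \tilde{F}^{k(\ell)-1}(x^{(\ell)}) + \tfrac12\|x^{(\ell)}-x^{(\ell-1)}\|_D^2 \ge \tilde{F}^{k(\ell)-1}(x^{(\ell)})$. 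The quantity $F(x^{(\ell)}) - \tilde{F}^{k(\ell)-1}(x^{(\ell)})$ is exactly $\delta_{k(\ell)-1}$ in the notation of Lemma~\ref{lem: increment_bound}, and since a descent step occurred at iteration $k(\ell)-1$, the descent test~\eqref{eqn: n_new_rule1} controls it: from $F(x^{(\ell)}) \le F(x^{(\ell-1)}) - \beta(F(x^{(\ell-1)}) - \tilde{F}^{k(\ell)-1}(x^{(\ell)}))$ I can solve for $F(x^{(\ell-1)}) - \tilde{F}^{k(\ell)-1}(x^{(\ell)}) \le \frac{1}{\beta}\big(F(x^{(\ell-1)}) - F(x^{(\ell)})\big)$, and then $F(x^{(\ell)}) - \tilde{F}^{k(\ell)-1}(x^{(\ell)})$ differs from this by $F(x^{(\ell)}) - F(x^{(\ell-1)}) \le 0$.

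\emph{Assembling.} Combining the two bounds should produce a coefficient of the form $\frac{1}{\beta} + \frac{1}{\beta} - 1$ times $(F(x^{(\ell-1)}) - F(x^{(\ell)}))$, or something that collapses to $\frac{3}{2\beta}$ after accounting for the half-factors in the squared-norm terms; the factor $\tfrac32$ as opposed to $\tfrac2\beta$ suggests one of the $\|x^{(\ell)}-x^{(\ell-1)}\|_D^2$ contributions enters with its natural coefficient $\tfrac12$ (from the proximal term in~\eqref{etakl-1}) rather than the full $\tfrac1\beta$. I would therefore be careful to track which step-length terms carry a factor $\tfrac12$ and which carry $\tfrac1\beta$, and I expect the arithmetic of reconciling these coefficients to land on $\frac{3}{2\beta}$.

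\emph{Main obstacle.} The delicate part is not any single inequality but the bookkeeping that connects $\eta^{k(\ell)}$ (built from the linearizations active at the \emph{new} center) to $\eta^{k(\ell)-1}$ (built at the \emph{old} center). These two subproblems use the same stored minorants $\tilde{f}_i$ except for the indices $j_{k(\ell)-1}$ and $j_{k(\ell)}$ that get relinearized around the transition, so I must make sure Lemma~\ref{l:eta-in-descent} is applied to exactly the pair $(\eta^{k(\ell)-1}, \eta^{k(\ell)})$ and that the descent test at $k(\ell)-1$ is the one controlling $\delta_{k(\ell)-1}$. Getting the constant $\tfrac32$ rather than a looser $\tfrac2\beta$ will hinge on using the $\tfrac12\|x^{(\ell)}-x^{(\ell-1)}\|_D^2$ proximal term in~\eqref{etakl-1} directly instead of discarding it.
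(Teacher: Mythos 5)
Your proposal is correct and follows essentially the same route as the paper: both inequalities of Lemma~\ref{l:eta-in-descent}, the expression \eqref{etakl-1} for $\eta^{k(\ell)-1}$, the descent test \eqref{eqn: n_new_rule1}, and — crucially, as you flag at the end — retaining the $\tfrac12\big\|x^{(\ell)}-x^{(\ell-1)}\big\|_D^2$ proximal term so that the left inequality's $+\big\|\cdot\big\|_D^2$ partially cancels against it, leaving $\tfrac12\big\|\cdot\big\|_D^2 \le \tfrac{1}{2\beta}\big(F(x^{(\ell-1)})-F(x^{(\ell)})\big)$ and hence the constant $\tfrac{3}{2\beta}$ rather than $\tfrac{2}{\beta}$. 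Your tentative guess of a coefficient $\tfrac1\beta+\tfrac1\beta-1$ in the assembly paragraph is not what materializes, but the correct bookkeeping you describe in the final paragraph is exactly the paper's computation.
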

\begin{proof}
The left inequality in \eqref{eta-descent} yields
\[
\eta^{k(\ell)} \geq \eta^{k(\ell)-1} - \big\|x^{(\ell)} - x^{(\ell-1)}\big\|_D^2.
\]
Since $F(x^{(\ell)}) \leq F(x^{(\ell-1)})$, we obtain
\[
F(x^{(\ell)}) - \eta^{k(\ell)} \leq F(x^{(\ell-1)}) - \eta^{k(\ell)-1} + \big\|x^{(\ell)} - x^{(\ell-1)}\big\|_D^2.
\]
As iteration $k(\ell)-1$ is a descent step, the update rule (\ref{eqn: n_new_rule1}) holds. Thus
\begin{align*}
F(x^{(\ell-1)}) - \eta^{k(\ell)-1} &= \Big[F(x^{(\ell-1)}) - \tilde{F}^{k(\ell)-1}(x^{(\ell)})\Big]
- \frac{1}{2}\big\|x^{(\ell)} - x^{(\ell-1)}\big\|_D^2\\
 &\leq \frac{1}{\beta}\Big[F(x^{(\ell-1)}) - F(x^{(\ell)})\Big] - \frac{1}{2}\big\|x^{(\ell)} - x^{(\ell-1)}\big\|_D^2.
\end{align*}
Combining the last two inequalities we obtain
\[
F(x^{(\ell)}) - \eta^{k(\ell)} \leq \frac{1}{\beta}\big(F(x^{(\ell-1)}) - F(x^{(\ell)})\big)
+ \frac{1}{2}\big\|x^{(\ell)} - x^{(\ell-1)}\big\|_D^2.
\]
The right inequality in \eqref{eta-descent} can be now used to substitute $\big\|x^{(\ell)} - x^{(\ell-1)}\big\|_D^2$ on the right hand side
to obtain \eqref{eqn: null_step_upper_bound_1}.
\end{proof}

We can now integrate our results.

Applying Lemma \ref{l:FD-improvement}, we obtain the following inequality at \emph{every} null step with prox center~$x^{(\ell)}$:
\begin{equation}
\label{e:limitbd}
\begin{aligned}
F(x^{(\ell)}) - \eta^k &\ge \frac{2\varphi(\alpha)}{1 + 2\varphi(\alpha)}\big( F(x^{(\ell)}) - F(x^*)\big)\\
                     &\ge \frac{2\varphi(\alpha)}{1 + 2\varphi(\alpha)}\big( F(x^{(\ell)}) - F(x^{(\ell +1)})\big).
\end{aligned}
\end{equation}
From Lemma \ref{lem: null_step_start_bound} we know that for $\ell \ge 2$ the initial value of the left hand side (immediately after the
previous descent step) is bounded from above by the following expression:
\begin{equation}\label{e:ub1}
F(x^{(\ell)}) - \eta^{k(\ell)} \leq \frac{3}{2\beta}\big(F(x^{(\ell-1)}) - F(x^{(\ell)})\big).
\end{equation}
Lemma \ref{lem: conv_rate_3} established a linear rate of decrease of the left hand side. Therefore, the number $n_\ell$ of null steps
with proximal center $x^{(\ell)}$, if it is positive, satisfies the inequality:
\[
\frac{3}{2\beta}\big(F(x^{(\ell-1)}) - F(x^{(\ell)})\big) \gamma^{n_\ell - 1} \ge \frac{2\varphi(\alpha)}{1 + 2\varphi(\alpha)}\big( F(x^{(\ell)}) - F(x^{(\ell +1)})\big).
\]
Consequently, for $\ell \ge 2$ we obtain the following upper bound on the number of null steps:
\begin{equation}
\label{nl}
n_\ell \leq 1 + \frac{1}{\ln(\gamma)}  \ln\left(\frac{4\beta\varphi(\alpha)}{3(1 + 2\varphi(\alpha))}\frac{F(x^{(\ell)}) - F(x^{(\ell+1)})}{F(x^{(\ell-1)}) - F(x^{(\ell)})}\right).
\end{equation}
If the number $n_\ell$ of null steps is zero, inequality \eqref{e:linear-rate} yields
\begin{align*}
\frac{F(x^{(\ell)}) - F(x^{(\ell+1)})}{F(x^{(\ell-1)}) - F(x^{(\ell)})}
&\le \frac{F(x^{(\ell)}) - F(x^{*})}{F(x^{(\ell-1)})- F(x^*) - \big(F(x^{(\ell)})- F(x^*)\big)}
 \le
\frac{1}{\frac{1}{1-\alpha\beta}-1}.
\end{align*}
Elementary calculations prove that both logarithms on the right hand side of \eqref{nl} are negative, and thus inequality \eqref{nl} is satisfied in this case as well.

Suppose there are $L$ proximal centers appearing throughout the algorithm: $x^{(1)}$, $x^{(2)}$, \ldots, $x^{(L)}$. They divide the progress of the algorithm into $L$ series of null steps. For the first series, similarly to the analysis above, we use \eqref{e:limitbd} and Lemma \ref{lem: conv_rate_3} to obtain the estimate
\begin{equation}
\label{n1}
n_1 \leq 1 + \frac{1}{\ln(\gamma)}  \ln\left(\frac{2\varphi(\alpha)}{1 + 2\varphi(\alpha)}
\frac{F(x^{(1)}) - F(x^{(2)})}{F(x^{(1)}) - \eta^1}\right).
\end{equation}
For the last series, we observe that the inequality $F(x^{(\ell)}) - \eta^k \ge \varepsilon/2$ has to hold at each null step at which the stopping test was
not satisfied. We
 use it instead of \eqref{e:limitbd} and we obtain
\begin{equation}
n_L \leq 1 + \frac{1}{\ln(\gamma)}  \ln\left(\frac{\beta}{3}\frac{\varepsilon}{F(x^{(L-1)}) - F(x^{(L)})}\right).
\end{equation}
We aggregate the total number of null steps for different proximal centers throughout the algorithm
and we obtain the following bound:
\begin{equation}\label{null step bound}
\begin{split}
\sum_{\ell=1}^L n_\ell &= \frac{L-1}{\ln(\gamma)}\left[\ln\left(\frac{2\varphi(\alpha)}{1 + 2\varphi(\alpha)}\right)
+ \ln\left(\frac{\beta}{3}\right)
+ \frac{1}{L-1}\ln\left( \frac{\varepsilon}{F(x^{(1)}) - \eta^1}\right)\right]
 + L
\end{split}
\end{equation}
Let us recall the definition of $\gamma$ in \eqref{tauk-bound}, and denote
\[
C = \frac{1}{2}\bigg(\frac{1-\beta}{N-1}\bigg)^2\frac{1}{M},
\]
so that $\gamma = 1 - \varepsilon {C}$. Since
$\ln(1-\varepsilon C)< -\varepsilon C$, we derive the following inequality for the number of null steps:
\begin{equation}
\label{null step bound3}
\sum_{\ell=1}^L n_\ell \le \frac{L-1}{-\varepsilon C}\left[\ln\left(\frac{2\varphi(\alpha)}{1 + 2\varphi(\alpha)}\right)
+ \ln\left(\frac{\beta}{3}\right)
+ \frac{1}{L-1}\ln\left( \frac{\varepsilon}{F(x^{(1)}) - \eta^1}\right)\right] + L.
\end{equation}
Let us now derive an upper bound on the number $L$ of descent steps. By virtue of \eqref{eqn: n_v}
and \eqref{eqn: n_new_rule1}, descent steps are made only if
\[
F(x^k) - F(x^*) \ge \beta\varepsilon;
\]
otherwise, the method must stop. To explain it more specifically, if $F(x^k) - F(x^*) \le \beta\varepsilon$, then $F(x^k) - F(z^k_{j_k}) \le \beta\varepsilon$. If a descent step were made, $F(z^k_{j_k}) \le F(x^k) - \beta v_k$. Then $\beta v_k \le \beta\varepsilon $. Since $v_k \le \varepsilon$, the algorithm would have already stopped, which contradicts our assumption.
 It follows from Lemma \ref{lem: conv_rate_2}, that
\[
(1 - \alpha \beta)^{L-1} \big(F(x^{(1)}) - F(x^{*})\big) \ge \beta\varepsilon.
\]
Therefore,
\begin{equation}
\label{decent step bound}
L \leq 1 + \frac{\ln(\beta\varepsilon) - \ln\big(F(x^1) - F(x^{*})\big)}{\ln( 1 - \alpha \beta)} .
\end{equation}
 As a result, we have the final bound for the total number of descent and null steps:
\begin{equation}\label{total step bound}
\begin{split}
{L+\sum_{\ell=1}^L n_\ell}
 &\le
\frac{1}{\varepsilon C\ln( 1 - \alpha \beta)}\ln\frac{F(x^{(1)}) - F(x^{*})}{\beta\varepsilon}\Bigg[\ln\left(\frac{2\varphi(\alpha)}{1 + 2\varphi(\alpha)}\right)+ \ln\left(\frac{\beta}{3}\right)\Bigg] \\
&{\quad} + \frac{1}{\varepsilon C}\ln\left( \frac{F(x^{(1)}) - \eta^1}{\varepsilon}\right)
+ 2\frac{\ln(\beta\varepsilon) - \ln\big(F(x^{(1)}) - F(x^{*})\big)}{\ln( 1 - \alpha \beta)} + 2.
\end{split}
\end{equation}
Therefore, in order to achieve precision $\varepsilon$, the number of steps needed is of order
\[
L+\sum_{\ell=1}^L n_\ell \sim \mathcal{O}\Bigg( \frac{1}{\varepsilon}\ln\bigg(\frac{1}{\varepsilon}\bigg)\Bigg).
 \]
This is almost equivalent to saying that given the number of iterations $k$, the precision of the solution is approximately $\mathcal{O}(1/k)$.

\section{Application to structured regularized regression problems}
\label{s:application}
In many areas in data mining and machine learning, such as computer vision and compressed sensing, the resulting optimization models consist of a convex loss function and multiple convex regularization functions, called the \emph{composite prior models} in \cite{huang2011composite}. For example, in compressed sensing, the linear combination of the total variation (TV) norm and $L_1$ norm is a popular regularizer in recovering Magnetic Resonance (MR) images. Formally, the models are formulated as follows:
\begin{equation}\label{prob: composite_prior}
\min_{x\in \mbR^n} F(x) = f(x) + \sum_{i=1}^N h_i(B_i x),
\end{equation}
where $f$ is the loss function to measure the goodness-of-fit of the data, while the functions $h_i$ are regularization terms. All the functions are convex but not necessarily smooth.

The SLIN algorithm introduced in our paper can be directly applied to solve the general problem \eqref{prob: composite_prior}. It can be further specialized to take advantage of additional
  features of the functions involved.
  In the following subsection  we discuss one such specialization.

\subsection{Fused lasso regularization problem}
The problem is defined as follows:
\begin{equation}
\label{example-fused}
\min_x \frac{1}{2}\big\|b-Ax\big\|_2^2 + \lambda_1\big\|x\big\|_1
+ \lambda_2 \sum_{j=1}^{p-1} \big|x_{j+1} - x_j\big|,
\end{equation}
where $A$ is an $m\times n$ matrix, and $\lambda_1, \lambda_2 > 0$ are fixed parameters. This model contains two regularization terms:  the \emph{lasso} penalty $h_1(x) = \lambda_1 \|x\|_1$,
 and the  \emph{fused lasso} penalty $h_2(x) = \lambda_2 \sum_j |x_{j+1} - x_j|$. We name the
first function as $f(x) := \frac{1}{2}\big\|b-Ax\big\|_2^2$. In models with a quadratic loss function,
 we found it convenient to use the matrix $D = \text{diag}(A^T A)$ in the proximal term of the method
 \cite[Sec. 3]{lin2014alternating}.

In order to solve each subproblem, we need the gradient of $f(\cdot)$ and subgradients of the regularization functions, which are readily available.
Our method requires their explicit calculation at the initial iteration only;
at later iterations they are obtained implicitly, as described in Step 1 of the algorithm.

With these, we can solve each subproblem iteratively.

\textbf{The $f$-subproblem.}
Skipping the constants, the $f$-subproblem has the form:
\begin{equation}
\min_x \frac{1}{2}\big\|b-Ax\big\|_2^2 + g^T_{h_1}x + g^T_{h_2}x + \frac{1}{2}\big\|x-x^k\big\|^2_D.
\end{equation}
This is a unconstrained quadratic optimization problem and its optimal solution can be obtained by solving the following linear system of equations:
 \[
 (A^TA + D)x = A^Tb - g_{h_1} - g_{h_2} + D^Tx^k.
 \]
 It can be very efficiently solved by the preconditioned conjugate gradient method with preconditioner $D$, as discussed in \cite[Sec. 3]{lin2014alternating}, because the condition index of the system
 is uniformly bounded. Only matrix--vector multiplications are involved, facilitating the use of a sparse structure of $A$.
 After the solution is obtained,
the gradient of $f(x)$ and its linearization can be determined by Step 1 of the SLIN algorithm.

\textbf{The $h_1$-subproblem.}
The subproblem is defined as follows (ignoring the constants):
\begin{equation}
\min_x g^T_{f}x + \lambda_1\big\|x\big\|_1 + g^T_{h_2}x + \frac{1}{2}\big\|x-x^k\big\|^2_D.
\end{equation}
This problem is separable in the decision variables, with the following closed-form solution:
\[
(x_{h_1})_i = {\rm \text{sgn}}(\tau_i)\max\Big(0, |\tau_i| - \frac{\lambda_1}{d_i}\Big),\quad i = 1,\ldots,n.
\]
Here $\tau_i = x^k_i - \frac{(g_{f})_i + (g_{h_2})_i}{d_i}$.

The solution of the $h_1$-subproblem gives a new subgradient of $h_1$ at the minimal point.

\textbf{The $h_2$-subproblem.}
The subproblem  is defined as follows (ignoring the constants):
\[
\min_x g^T_f x + g^T_{h_1} x + \lambda_2 \sum_{j=1}^{p-1} \big|x_{j+1} - x_j\big| + \frac{1}{2} \big\|x - x^k\big\|^2_D.
\]
Exactly as described in \cite{lin2014alternating}, this problem can be equivalently formulated as a
constrained optimization problem:
\[
\min_{x,z} g^T_f x + g^T_{h_1} x + \lambda_2 \big\|z\big\|_{1} + \frac{1}{2}\big\|x - x^k\big\|^2_D,\ \text{subject to}\ Rx = z,
\]
with an $(n-1)\times n$ matrix $R$ representing the system $z_j=x_{j+1}-x_j$, $j=1,\dots,n-1$.
The Lagrangian of problem (19) has the form
\[
L(x,z,\mu) = g^T_f x + g^T_{h_1} x + \lambda_2 \big\|z\big\|_{1} + \mu^T (Rx - z) + \frac{1}{2}\big\|x - x^k\big\|^2_D,
\]
where $\mu$ is the dual variable. The minimum of the Lagrangian with respect to $z$ is finite if and only if $\big\|\mu\big\|_{\infty} \leq \lambda_2$. Under this condition, the minimum value of the $z$-terms is zero and we can eliminate
them from the Lagrangian. We arrive to its reduced form,
\[
\hat{L}(x,\mu) = g^T_f x + g^T_{h_1} x + \mu^T Rx + \frac{1}{2}\big\|x - x^k\big\|^2_D.
\]
To calculate the dual function, we minimize $\hat{L}(x,\mu)$ with respect to $x$. After elementary calculations, we obtain the solution
\[
\tilde{x}_{h_2} = x^k - D^{-1} ( g_f + g_{h_1} + R^T \mu).
\]
Substituting it back to the Lagrangian, we obtain the following dual problem:
\[
\max_{\mu} -\frac{1}{2} \mu^T RD^{-1}R^T \mu + \mu^T R(x^k - D^{-1}g_f - D^{-1}g_h), \ \text{subject to}\  \big\|\mu\big\|_{\infty} \leq \lambda_2.
\]
This problem can be treated as a box-constrained quadratic programming problem, for which many efficient algorithms are available, for example coordinate-wise optimization
\cite[Sec. 4]{lin2014alternating}. Due to the structure of $R$, the computational effort per iteration is linear in the problem dimension.

\section{Overlapping group lasso problem}
We consider the following problem
\begin{equation}
\label{example-over}
\min_x \frac{1}{2K\lambda} \big\|b - Ax\big\|_2^2 + \sum_{j=1}^K d_j \big\|x_{\Gc_j}\big\|_2
\end{equation}
where $A \in R^{m\times n}$. This model contains the first function as $f(x) := \frac{1}{2K\lambda}||b-Ax||_2^2$ where parameter $\lambda>0$ and number of groups $K$ are pre-specified parameters. The second part is a sum of regularization terms, each penalty function as $h_j(x) = d_j||x_{\Gc_j}||_2$ where the  weights $d_j>0$ are known parameters.  $\Gc_j \subseteq \{1,\ldots, p\}$ is the index set of a group of variables and $x_{\Gc_j}$ denotes the subvector of $x$  with coordinates in $\Gc_j$. This group regularizer has been proven useful in high-dimensional statistics with the capability of selecting meaningful groups of features. The groups could overlap as needed. As the quadratic term has a coefficient of $\frac{1}{2K\lambda}$, the diagonal matrix $D$ in the proximal term of the method is set to $D = \frac{1}{K\lambda}\text{diag}(A^T A)$.

\noindent\textbf{The $f$-subproblem.}
The $f$-subproblem has the form:
\[
\min_x \frac{1}{2K\lambda}\big\|b-Ax\big\|_2^2 +  \sum^K_{j=1} g^T_{j}x  + \frac{1}{2}\big\|x-x^k\big\|^2_D.
\]
It has the same structure as the $f$-subproblem of the general structured fused lasso example, and can be solved in the same way;
just the matrix $D$ is different.

\noindent\textbf{The $h_j$-subproblem.}
The $h_j$-subproblem is defined as follows (ignoring the constants):
\begin{equation}
\label{hjsub}
\min d_j\big\|x_{\Gc_j}\big\|_2 + \langle s, x\rangle + \frac{1}{2}\big\|x-x^k\big\|^2_D.
\end{equation}
where $s = g_{f} + \sum_{j'\neq j}g_{h_{j'}}$; with $g_f$ denoting a subgradient of the function $f$, and
$g_{h_{j'}}$ the subgradients of $h_{j'}$ used in \eqref{subproblem}. To simplify notation, from now on we write $\Gc$ for $\Gc_j$.

The decision variables that are outside of the current group $\Gc$, which we denote $x_{-\Gc}$, have the following closed-form solution:
\[
x_{-\Gc}= x^k_{-\Gc} - D^{-1}_{-\Gc}s_{-\Gc}.
\]
The variables in the current group $\Gc$ can be calculated as follows. If $x_{\Gc}\neq 0$, the necessary and sufficient
optimality condition for \eqref{hjsub} is the following equation:
\begin{equation}\label{firstoverlap}
\frac{d_j x_{\Gc}}{\big\|x_{\Gc}\big\|_2}+ s_{\Gc} + D_{\Gc}(x_{\Gc} - x^k_{\Gc}) = 0.
\end{equation}
We denote
\begin{equation}\label{eqn: k}
\frac{d_j}{\big\|x_{\Gc}\big\|_2} = \kappa,
\end{equation}
This leads to
\begin{equation}\label{eqn: xi}
x_i = \frac{D_{ii}x^k_i - (s)_i}{\kappa + D_{ii}},\quad i \in \Gc.
\end{equation}
Substituting into \eqref{eqn: k}, after simple manipulations, we obtain the following equation for $\kappa$:
\begin{equation}\label{eqn: k_solve}
\sum_{i\in \Gc_j}\biggl(\frac{D_{ii}x^k_i - (s)_i}{1 + \frac{ D_{ii}}{\kappa}}\biggr)^2 = d^2_j.
\end{equation}
Since the left hand side of this equation is an increasing function of $\kappa$, we can easily solve it by bisection,
if a solution exists. If the columns if $A$ are normalized, then all $D_{ii}=1$, and equation
\eqref{eqn: k_solve} can be solved in closed form.

Letting $\kappa\to \infty$ on the left hand side, we obtain the condition for the existence
of a solution of \eqref{eqn: k_solve}:
\begin{equation}
\label{existence}
\sum_{i\in \Gc_j}\big(D_{ii}x^k_i - (s)_i\big)^2 > d^2_j.
\end{equation}
If inequality \eqref{existence} is satisfied, $\kappa$ can be found by bisection and $x_{\Gc}$ follows from
  \eqref{eqn: xi}. If \eqref{existence} is not satisfied,
the only possibility is that the optimal solution of \eqref{hjsub} is $x_{\Gc}=0$.

\section{Numerical Results}
In this section, we  present some experimental results for problems
\eqref{example-fused} and \eqref{example-over}. All these studies are performed on an 1.8 GHZ, 4GB RAM computer using MATLAB.

\subsection{Fused lasso experiments}
In Tables \ref{table: running_time1} and \ref{table: running_time2}, we evaluate SLIN against {three} its modifications, in order to assess the usefulness of the main features of the method: the dynamic selection of the block to be optimized, and the sufficient improvement test. The first modification processes the blocks in a fixed order and performs the improvement test after every block, to decide whether to change the current value of $x^k$. In a way, it is a direct extension of the alternating linearization method (ALIN) of \cite{lin2014alternating}, and is labeled as such in the tables. {The second modification processes the blocks in a fixed order and updates $x^k$ after each cycle of 3 blocks.
 In the case of two blocks,  it would correspond to the Douglas--Rachford operator splitting method of \cite{Lions-Mercier}; see  \cite{lin2014alternating}. We use the name Douglas--Rachford in the tables.}
{The third modification processes the blocks in a fixed order and carries out the update of $x^k$ after each block. In the case of two blocks, it would correspond to the Peaceman--Rachford operator splitting method of \cite{Lions-Mercier},
 as explained in \cite{lin2014alternating}, and we use this name in the tables}.

 We report the average performance of all 4 versions in cases when $m > n$ $ (m=1000, n = 300)$ and when $m < n$ $ (m = 300, n = 1000)$, with different tolerance and regularization parameters. We run the experiments 10 times with different samples of the  matrix $A$ and report the average results and their standard deviation.  In the tables, ``Iterations'' denotes the average iteration number with 1000 as the default maximum iteration number; ``Time'' denotes the average CPU run time in seconds; ``Relative Error'' is defined as the relative difference between the optimal value (obtained by MATLAB ``fminunc'' function) and those obtained by the SLIN, ALIN, and operator splitting methods, respectively. ``MAXITER'' and ``NA'' indicate that the algorithm did not converge in the pre-specified number of 1000 iterations. We observe that to obtain the same accuracy, SLIN requires significantly fewer iterations and less CPU time than the other versions. The versions without sufficient progress test (na\"{i}ve extensions of operator splitting methods) did not converge in many cases. The version with the fixed block order (na\"{i}ve extension of the alternating linearization method), although convergent, was always significantly slower.

In Table \ref{table: running_time3}, we report performance of SLIN with different values of the parameter $\beta$. Based on that, we  use $\beta = 0.5$ in all our further experiments.

\begin{table}[h]
\centering
\begin{tabular}{ccccc}
  \toprule[0.1em]
 \multirow{2}{*}{Parameters} & \multirow{2}{*}{Method} & \multirow{2}{*}{Iterations}  & \multirow{2}{*}{Time} & \multirow{2}{*}{Relative Error}\\
  \\
  \toprule[0.1em]
  $tol = 10^{-3}$ & Selective Linearization &55(28.75) &  0.99(0.47)& 0.0156(0.0043)\\
  $\lambda = 1\tau$ & Alternating Linearization &66(31.04) & 1.10 (0.99) & 0.0161(0.0048) \\
    & Douglas-Rachford & 323(78.57) & 5.53 (0.96)& 0.0161(0.0046)\\
  & Peaceman-Rachford & MAXITER(0.00) & 16.93 (0.23)& 0.9191(0.2123)\\
    \toprule[0.1em]
  $tol = 10^{-7}$ &  Selective Linearization & 46 (25.79) &  1.00 (0.41) & 0.0154(0.0075) \\
  $\lambda = 1\tau$ & Alternating Linearization & 173 (31.30) &  2.98 (0.60)&
 0.0166(0.0078) \\
  & Douglas-Rachford & 246(42.57) & 4.32 (0.73)& 0.0155(0.0076)\\
  & Peaceman-Rachford & MAXITER (0.00) &  16.75 (0.22) & NA(NA)\\
   \toprule[0.1em]
   $tol = 10^{-3}$ &  Selective Linearization & 79(2.30) &  1.60 (0.07) & 0.0004(0.0000)\\
  $\lambda = 10^{-2}\tau$ &  Alternating Linearization & 91 (3.05) &  1.64 (0.06) &0.0004(0.0000)\\
  & Douglas-Rachford & 205(10.17) & 3.45 (0.21)& 0.0004(0.0000)\\
 & Peaceman-Rachford & MAXITER (0.00) &  16.57 (0.11) & NA(NA)\\
    \toprule[0.1em]
   $tol = 10^{-7}$ &   Selective Linearization & 177(20.68) &  3.06(0.88) & 0.0028(0.0019)\\
   $\lambda = 10^{-2}\tau$ & Alternating Linearization & 450(50.74)&  8.01(4.06) & 0.0028(0.0019)\\
   & Douglas-Rachford & 573(59.81) & 9.91 (0.52)& 0.0028(0.0019)\\
  &Peaceman-Rachford & MAXITER (0.00) &  16.60(0.15) & NA(NA)\\
  \bottomrule[0.1em]
\end{tabular}
\caption{Comparison of methods on problems  with $m = 1000$ and $n = 300$.}\label{table: running_time1}
\end{table}

\begin{table}[h]
\centering
\begin{tabular}{ccccc}
  \toprule[0.1em]
 \multirow{2}{*}{Parameters} & \multirow{2}{*}{Method} & \multirow{2}{*}{Iterations}  & \multirow{2}{*}{Time} & \multirow{2}{*}{Relative Error}\\
  \\
  \toprule[0.1em]
    $tol = 10^{-3}$ &Selective Linearization & 83(14.15) & 11.26(3.01) & 0.0393(0.0103)\\
   $\lambda = 1\tau$ & Alternating Linearization & 425(13.96) &  55.97(4.87) & 0.0456(0.0126)\\
   & Douglas-Rachford & MAXITER(0.00) & 130.39 (6.42)& 0.0355(0.0096)\\
 & Peaceman-Rachford & MAXITER(0.00) &  131.19 (8.48)& NA(NA)\\
    \toprule[0.1em]
    $tol = 10^{-7}$ &  Selective Linearization & 257 (7.97) &  36.76(13.93) & 0.0165(0.0059)\\
   $\lambda = 1\tau$ &  Alternating Linearization & 419(8.23) & 59.86(1.99) & 0.0240(0.0083) \\
      & Douglas-Rachford & MAXITER(0.00) & 144.31 (18.14)& 0.0187(0.0059)\\
&  Peaceman-Rachford & MAXITER(0.00) &  143.93 (11.56)& 0.0118(0.0063)\\
   \toprule[0.1em]
    $tol = 10^{-3}$ &  Selective Linearization & 210(27.96) &  27.30(18.07) & 0.0483(0.0160) \\
    $\lambda = 10^{-2}\tau$ &  Alternating Linearization & 290 (34.73) &  39.86(28.05) & 0.0484(0.0160)\\
          & Douglas-Rachford & 291(53.85) & 39.51 (32.29)& 0.0484(0.0160)\\
 & Peaceman-Rachford & MAXITER(0.00) &139.38(18.88) & NA(NA)\\
    \toprule[0.1em]
  $tol = 10^{-7}$ &  Selective Linearization & 404 (45.45) &  49.51(8.04) & 0.0550(0.0181) \\
     $\lambda = 10^{-2}\tau$ & Alternating Linearization & 780(63.45) & 107.23(5.01) &0.0550(0.0181)\\
  & Douglas-Rachford & 891(53.07) & 106.94 (12.29)& 0.0550(0.0181)\\
 & Peaceman-Rachford & MAXITER(0.00) & 133.34(7.81) & NA(NA)\\
  \bottomrule[0.1em]
\end{tabular}
\caption{Comparison of methods on problems with $m = 300$ and $n = 1000$}\label{table: running_time2}
\end{table}

\begin{table}[h]
\centering
\begin{tabular}{cccc}
  \toprule[0.1em]
  \multirow{2}{*}{$\beta$} & \multirow{2}{*}{Iterations}  & \multirow{2}{*}{Time} & \multirow{2}{*}{Relative Error}\\
  \\
  \toprule[0.1em]
  0.2 & 76(25.61) & 0.82(0.95) &  0.0200(0.0100)\\
  0.5 & 28 (24.70)&  0.39 (0.64)& 0.0080(0.0037)\\
 0.8 & 74(33.72) & 1.08 (0.63)& 0.0400(0.0200)\\
    \bottomrule[0.1em]
\end{tabular}
\caption{The effect of different values of  $\beta$ in the SLIN algorithm for the problem with $n = 1000$ and $p = 300$.}\label{table: running_time3}
\end{table}

\subsection{Overlapping group lasso experiments}
We compared our method with existing algorithms in the tree-structured, fixed order, and random order cases. It shows that SLIN does not care how the groups are partitioned and is applicable to arbitrary group-splitting cases.
\subsubsection{Tree-structured overlapping groups}
In a tree-structured overlapping group lasso problem, described in \cite{JMOB11}, the groups correspond to nodes of a tree. Thus, for any two groups, either one is a subset of the other, or they are disjoint. The design matrix and input vector are centered and normalized to have unit $\ell_2$-norms. We conduct the speed comparisons between our approach and FISTA \cite{JMOB11}. From Table \ref{table: running_time4} we can see that the SLIN algorithm is faster in terms of both iteration number and computational time.

\begin{table}[h!]
\centering
\begin{tabular}{cccc}
  \toprule[0.1em]
  \multirow{2}{*}{Parameters} & \multirow{2}{*}{Methods} &\multirow{2}{*}{Iter}  & \multirow{2}{*}{Time} \\
  \\
  \toprule[0.1em]
$ m=100, n =10$&SLIN & 11.60 (0.70) & 0.0897(0.0035) \\
 $K =8$ &FISTA & 25.20(2.85)& 0.1385(0.0138) \\
    \bottomrule[0.1em]
\end{tabular}
\caption{Comparison of SLIN and FISTA on tree-structured overlapping group lasso problem. }\label{table: running_time4}
\end{table}

\subsubsection{Fixed order overlapping groups}
We simulate data for a univariate linear regression model with an overlapping group structure. The entries are sampled from i.i.d. normal distributions, $ x_j = (-1)^j \exp(-(j-1)/100)$, and $b = Ax + \varepsilon$, with the noise $\varepsilon$  sampled from the standard normal distribution.  Assuming that the inputs are ordered, we define a sequence of $K$ groups of 100 adjacent inputs with an overlap of 10 variables between two successive groups, so that
\begin{equation}\label{eqn: structure_group}
G = \{\{1,\ldots,100\},\{91,\ldots,190\},\ldots,\{n-99,\ldots,p\}\},
\end{equation}
where $n = 90K + 10$.  We adopt uniform weights $ d_j = 1/K$ and set $\lambda = K/5$.

To demonstrate the efficiency and scalability of the SLIN algorithm, we compared SLIN with several specialized methods
for overlapping group lasso problems: PDMM of \cite{chatzipanagiotis2015augmented,WBL14}, sADMM or Jacobian ADMM of \cite{DLY13}, PA-APG of \cite{YY12} and S-APG of \cite{CLKCX12}.  All experiments were run sequentially, that is, no parallel processing features were exploited. We run the experiments 10 times with different samples of the  matrix
$A$; we report the average results.

Figure \ref{figure: compare_line} plots the convergence of the objective function values \emph{versus} the number of iterations,
for the number of groups $K = 100$.  For the dual methods PDMM and sADMM, we report the values of the {augmented Lagrangian}. They go from super optimal (because the iterates are infeasible) and converge to the optimal value. The SLIN algorithm is the fastest, in terms of iterations while PDMM  is the second, and sADMM the third. The two accelerated methods, PA-APG  and S-APG, are the slowest in these tests.

\begin{figure}[h!]
\centering
\includegraphics[scale=0.7]{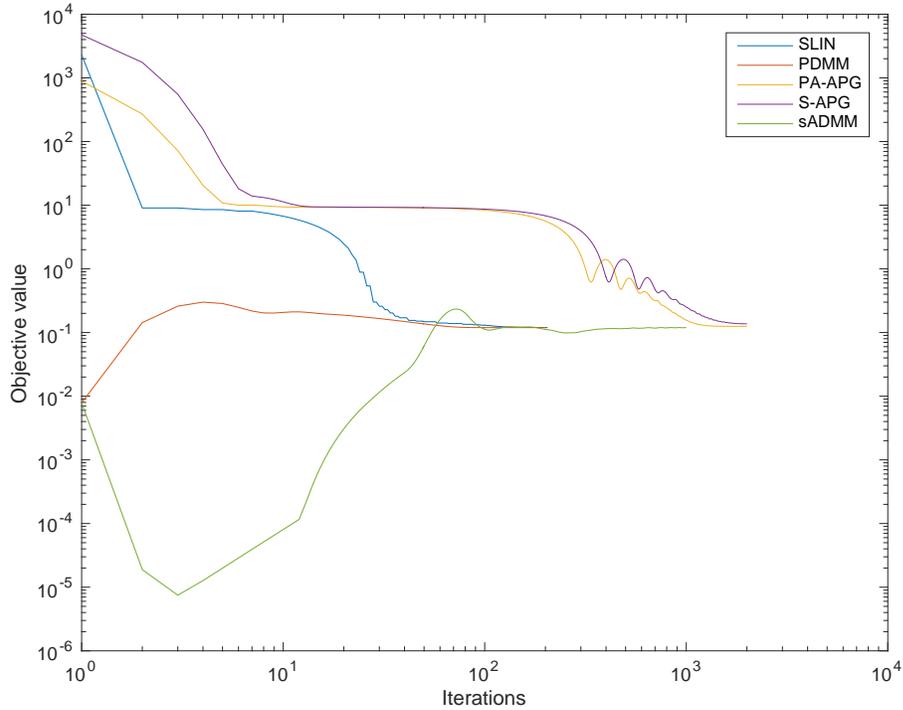}
\caption{Comparison of  SLIN  and other algorithms on an overlapping group lasso problem.}\label{figure: compare_line}
\end{figure}
\subsubsection{Randomly overlapping groups}
 In the next stage,  we conducted additional comparisons between SLIN and PDMM on group lasso problems with randomized overlapping, which do not exhibit the regular group structure specified in (\ref{eqn: structure_group}).

This type of problem arises in applications such as bioinformatics [3], where one uses prior information to model potential overlapping of groups of variables. For example, in high throughput gene expression analysis, the number of parameters to be estimated is much greater than the sample size. One often utilizes information including gene ontology to define group overlaps among genes, thereby achieving structured regularization \cite{VRMV14}. The resulting overlaps are ``arbitrary'' (depending on the specific gene ontology) and more complex than the systematic overlapping example described in (\ref{eqn: structure_group}). We generated test cases in which the indices in each of the 100 groups were assigned to the $n$ locations. As a result, the number of overlapping variables between the groups was random, and multiple group membership was possible. The performance of the two methods on randomized overlapping group lasso problems is summarized in Tables \ref{table: running_time4}  and \ref{table: running_time5}.

For fair comparison of the methods, we run PDMM on each instance of the problem. PDMM was set to run to $\text{``tol''} = 10^{-4}$ or 2,000 iterations, whichever came first. We set the tuning parameters $d_g = 0.01/K$, and $0.02/K$, respectively. Then SLIN was set to run until the objective function values obtained were as good as that of PDMM.  We run the experiments 10 times with different samples of the randomly generated groups; in Tables \ref{table: running_time5} and \ref{table: running_time6}  we report the average results and their standard deviations. In all cases, the number of iterations of SLIN is much smaller than that of PDMM.
In the determined cases, where $m=1000$ and $n=800$, the running time of SLIN is usually better than
that of PDMM. In the under-determined cases, where $m=500$ and $n=600$, the running time of SLIN is slightly worse than that of PDMM.


In summary, we can conclude that SLIN is a highly efficient and reliable general-purpose method for multi-block optimization of convex nonsmooth functions. It successfully competes with dedicated methods for special classes of problems.

\begin{table}[h!]
\centering
\begin{tabular}{cccc}
  \toprule[0.1em]
  \multirow{2}{*}{Parameters} & \multirow{2}{*}{Methods} &\multirow{2}{*}{Iter}  & \multirow{2}{*}{Time} \\
  \\
  \toprule[0.1em]
$ m=1000, n =800$&SLIN & 280 (15.82) & 5.42(0.36) \\
 $K =80, d_g = 0.01/K$ &PDMM & 638(21.14)& 5.48(0.10)  \\
       \toprule[0.1em]
$ m=1000, n =800$&SLIN & 308 (16.11)& 5.38(0.43) \\
$ K =90, d_g = 0.01/K$ &PDMM & 836(29.33) & 7.19(0.39) \\
      \toprule[0.1em]
$ m=1000, n =800$&SLIN & 331 (13.05)& 5.89(0.28) \\
$ K =100, d_g = 0.01/K$ &PDMM & 991 (75.51) & 8.91(0.81) \\
   \toprule[0.1em]
$ m=1000, n =800$&SLIN & 306 (11.79)& 5.53(0.23) \\
$ K =80, d_g = 0.02/K$ &PDMM & 560(59.47) & 4.75(0.45) \\
     \toprule[0.1em]
$ m=1000, n =800$&SLIN & 330 (11.19)& 5.63(0.22)  \\
 $K =90, d_g = 0.02/K$ &PDMM & 620(38.33) & 5.65(0.91) \\
    \toprule[0.1em]
      $ m=1000, n =800$&SLIN & 364 (13.77)& 6.09(0.31)  \\
 $K =100, d_g = 0.02/K$ &PDMM & 741(58.06) & 4.14 (0.47) \\
    \bottomrule[0.1em]
\end{tabular}
\caption{Comparison SLIN and PDMM in solving an overlapping group lasso of randomly generated groups. Determined cases with $m=1000$ and $n=800$.}\label{table: running_time5}
\end{table}
\begin{table}[h!]
\centering
\begin{tabular}{cccc}
  \toprule[0.1em]
  \multirow{2}{*}{Parameters} & \multirow{2}{*}{Methods} &\multirow{2}{*}{Iter}  & \multirow{2}{*}{Time} \\
  \\
  \toprule[0.1em]
$ m=500, n =600$&SLIN & 1280 (39.73) & 17.75(0.60) \\
 $K =80, d_g = 0.01/K$ &PDMM & 1453(157.07)& 12.01(1.03)  \\
       \toprule[0.1em]
$ m=500, n =600$&SLIN & 1119 (57.76)& 16.61(1.00) \\
$ K =90, d_g = 0.01/K$ &PDMM & 1566(61.63) & 13.88(1.63) \\
      \toprule[0.1em]
$ m=500, n =600$&SLIN & 973 (39.46)& 13.72(1.241) \\
$ K =100, d_g = 0.01/K$ &PDMM & 1753 (122.27) & 16.75(2.64) \\
   \toprule[0.1em]
$ m=500, n =600$&SLIN & 792 (36.58)& 9.79(0.73) \\
$ K =80, d_g = 0.02/K$ &PDMM & 968(39.82) & 7.53(0.54) \\
     \toprule[0.1em]
$ m=500, n =600$&SLIN & 722 (23.46)& 8.87(0.65)  \\
 $K =90, d_g = 0.02/K$ &PDMM & 1170(102.16) & 9.57(1.00) \\
    \toprule[0.1em]
      $ m=500, n =600$&SLIN & 683 (15.66)& 8.01(0.41)  \\
 $K =100, d_g = 0.02/K$ &PDMM & 1208(70.30) & 10.81 (1.21) \\
    \bottomrule[0.1em]
\end{tabular}
\caption{Comparison SLIN and PDMM in solving an overlapping group lasso of randomly generated groups. Underdetermined cases with $m=500$ and $n=600$}\label{table: running_time6}
\end{table}


\newcommand{\etalchar}[1]{$^{#1}$}

\end{document}